\newcommand{\blue}{}
\definecolor{mygreen}{rgb}{0.1,0.75,0.2}
 \newtheorem{thm}{Theorem}[section]
 \newtheorem{lem}[thm]{Lemma}
 \newtheorem{rem}[thm]{Remark}
\numberwithin{equation}{section}
\DeclareMathOperator{\st}{s.t.~}
\newcommand{\la}{\langle}
\newcommand{\ra}{\rangle}
\newcommand{\pt}{\partial}
\newcommand{\eps}{\varepsilon}
\newcommand{\Ps}{\mathscr{P}}
\newcommand{\Pc}{\mathcal{P}}
\providecommand{\bbs}[1]{\left(#1\right)}
\newcommand{\aaa}[1]{\begin{equation}
begin{aligned} #1 \end{aligned}
\end{equation}}
\newcommand{\ud}{\,\mathrm{d}}
\newcommand{\8}{\infty}
\newcommand{\mm}{\mathcal{M}}
\newcommand{\nn}{\mathcal{N}}
\newcommand{\mx}{\mathbf{x}}
\newcommand{\my}{\mathbf{y}}
\newcommand{\mv}{\mathbf{v}}
\newcommand{\mz}{\mathbf{z}}
\newcommand{\piw}{\frac{(\pi_i+\pi_j)|\Gamma_{ij}|}{2}}
\newcommand{\hs}{\mathcal{H}}
\newcommand{\bR}{\mathbb{R}}
\newcommand{\bE}{\mathbb{E}}
\newcommand{\bP}{\mathbb{P}}
\newcommand{\xb}{{\mathbf x}}
\DeclareMathOperator*{\argmax}{argmax}
\DeclareMathOperator*{\argmin}{argmin}
\renewcommand{\vec}[1]{\ensuremath{\boldsymbol{#1}}}
\newcommand{\abs}[1]{\left| #1 \right|}
\newcommand{\norm}[1]{\left\|{#1}\right\|}
\begin{document}

\title[Transition path theory for Langevin dynamics on manifolds ]{Transition path theory for Langevin dynamics on manifolds: optimal control and data-driven solver}

\author[Y. Gao]{Yuan Gao}
\address{Department of Mathematics, Purdue University, West Lafayette, IN, USA}
\email{gao662@purdue.edu}

\author[T. Li]{Tiejun Li}
\address{LMAM and School of  Mathematical Sciences, Peking University, Beijing 100871, China}
\email{tieli@pku.edu.cn}

\author[X. Li]{Xiaoguang Li}
\address{MOE-LCSM, School of Mathematics and Statistics, Hunan Normal University, Changsha 410081,  China}
\email{lixiaoguang@hunnu.edu.cn}

\author[J.-G. Liu]{Jian-Guo Liu}
\address{Department of Mathematics and Department of
  Physics, Duke University, Durham, NC, USA}
\email{jliu@math.duke.edu}

\begin{abstract}
We present a data-driven point of view for   rare events, which represent conformational transitions in biochemical reactions modeled by over-damped Langevin dynamics  on manifolds in high dimensions.   {\blue We first reinterpret the transition state theory and the transition path theory  from the optimal control viewpoint. Given  point clouds sampled from a    reaction dynamic, we construct a discrete Markov process based on an approximated Voronoi tesselation. We use the constructed Markov process  to compute a discrete committor function whose level set automatically orders the point clouds. Then based on the committor function, an  optimally controlled random walk on point clouds is constructed and utilized to efficiently sample  transition paths, which become an  almost sure event in $O(1)$ time instead of a rare event in the original reaction dynamics.  To compute the mean transition path efficiently,  a local averaging algorithm based on the optimally controlled random walk   is developed, which adapts the finite temperature string method to the controlled Monte Carlo samples.  Numerical examples on sphere/torus including a conformational transition for the alanine dipeptide in vacuum  are conducted to illustrate the data-driven solver for the  transition path theory on point clouds.
The  mean transition path obtained via the controlled Monte Carlo simulations highly coincides with the computed dominant transition path in the transition path theory.}
\end{abstract}

\keywords{ Reaction rates, minimum energy path,   committor function, controlled Markov process,  realize rare events almost surely,  nonlinear dimension reduction}
\subjclass[2010]{60J22, 65C05, 60H30, 93E20}

\maketitle
\section{Introduction}

Complex molecular dynamics in chemical/biochemical reactions usually have cascades of timescales. For instance, the molecular vibrations occur in femtosecond  time scale, while the conformational transitions occur in microsecond time scale. Assume the states $\{\xb(t)\}$ of the original molecular dynamics are  in a high dimensional space $\mathbb{R}^p$, and suppose the most important slow  dynamics such as conformational transitions can be described by a reduced over-damped Langevin dynamics in terms of reaction coordinates $\my$ on an intrinsic low-dimensional manifold $\nn\subset \mathbb{R}^\ell$, where $\ell\ll p$ \cite{coifman2006diffusion}. Then the slow dynamics on $\nn$ is guided by a reduced free energy $U_\nn(\my)$, $\my \in\nn$, whose local minimums indicate several metastable states (say $a,b$) of the dynamics.   Those conformational transitions from one metastable state $a$ to another metastable state $b$  are rare (but significant) events compared with typical relaxation dynamics in each energy basin. Thus  efficient simulation and computation of transition rates or reaction pathways for the conformational transitions are challenging and important problems, which has been one of the core subjects in applied mathematics in recent years; see recent  review in  \cite{E_Vanden-Eijnden_2010}.

To make the discussion precise, we denote the over-damped Langevin dynamics of $\my_t$ by
\begin{equation}\label{sde-y}
\ud {\my_t} = - \nabla_\nn   U_{\nn}(\my_t) \ud t + \sqrt{2\eps} \sum_{i=1}^d \tau^\nn_{i}(\my_t)\otimes \tau^\nn_i(\my_t) \circ \ud B_t,
\end{equation}
where  $\eps>0$ corresponds to the thermal energy in physics, the symbol $\circ$ means the Stratonovich  integration, $B_t$ is $\ell$-dimensional Brownian motion, $d$ is the intrinsic dimension of the manifold $\nn$ and $\{\tau^\nn_i;\, 1\leq i \leq d\}$ are orthonormal basis of tangent plane $T_{\my_t}\nn$.
Here $\nabla_\nn := \sum_{i=1}^d \tau^{\nn}_i \nabla_{\tau^{\nn}_i}= \sum_{i=1}^d \tau^{\nn}_i \otimes \tau^{\nn}_i \nabla$ is the surface gradient and $\nabla_{\tau^{\nn}_i}=\tau^{\nn}_i \cdot \nabla$ is the tangential derivative in the direction of $\tau^{\nn}_i$. Given the manifold $\nn$ and potential $U_{\nn}(\my)$, when $\eps$ is small, the study of rare events by direct simulation of \eqref{sde-y} is not feasible. This motivates the need of theoretical developments. In the limit $\eps\rightarrow 0$, the optimal transition path problem can be well described through the large deviation theory \cite{Freidlin_Wentzell_2012}. This was formulated as the minimum action method (MAM) \cite{MAM}, and was extended to the manifold case in \cite{zhou2016}. In the gradient case, the optimal transition path by MAM is actually the minimal energy path (MEP) connecting two metastable states, which was realized by the string method \cite{string}. The string method was further extended to the finite $\eps$ case, i.e. the finite temperature string method for gradient systems \cite{FTSM}. In the general finite noise case, the transition path theory (TPT) was first proposed by \textsc{E and Vanden-Eijnden} in \cite{weinan2006} to obtain  the transition paths and transition rates, etc., by the committor function $q$ (see \eqref{dn}).  A mathematically rigorous interpretation of TPT was given in \cite{Lu_Nolen_2015} (see precise statement in \cite[Theorem 1.7]{Lu_Nolen_2015}). The generalization of TPT to Markov jump process was given in \cite{MMS2009}.

We are concerning the rare event study from a data-driven point of view in this paper. In many cases, the manifold $\nn$ is not explicitly known and we assume only the  point clouds $\{\mathbf x_i\}_{i=1:n}$ are available from some physical dynamics on an unknown {\blue $d$ dimensional closed smooth} Riemannian submanifold $\mm \subset \mathbb{R}^p$.  {\blue We assume} one can  learn the reaction coordinates $\my=\Phi(\mathbf x): \mm\hookrightarrow \mathbb{R}^\ell$  by using  $\{\mathbf x_i\}_{i=1:n}$ in $\mathbb{R}^p$. Thus $\nn= \Phi(\mm)$ is a {\blue $d$ dimensional closed smooth} submanifold of  $\bR^\ell$ and one can represent the high dimensional data $\{\mathbf x_i\}\subset\mm\subset\bR^p$ as $\{\my_i\}=\{\Phi(\mx_i)\}\subset \nn\subset \bR^\ell$ in the low dimensional space. {\blue In general, this dimension reduction step is very challenging. Besides the standard diffusion map nonlinear dimension reduction \cite{coifman2006diffusion}, some reinforced learning methods based on build-in domain knowledge are developed recently; c.f. \cite{ZWE}.  As for a real example, we simulate a simple, manageable alanine dipeptide with 22 atoms to collect a full atomic molecular dynamics result for $\mx\in \bR^{66}$. Its lower energy states are  mainly described by two backbone dihedral angles $\phi\in [-\pi, \pi)$ and $\psi\in [-\pi, \pi)$, so the reaction coordinates $\my$ is chosen to be in a torus $\mathcal{N} \subset \bR^{3}$; see Fig. \ref{fig:aa} and Example 3 below.}
With the learned reaction coordinates $\my$, the main goal is to effectively simulate the conformational transitions on $\nn$.

Our main contributions are in two folds: (i) we give the {\it stochastic optimal control reinterpretations of the committor function} in TPT;  (ii)  {\blue   adapting similar idea as the  finite temperature string method,   we {\it proposed a data-driven solver for finding a mean transition path on manifold, which are constructed using the level-set defined by committor function and   taking advantage of an optimally controlled Monte Carlo simulation.}} They are detailed as below.

\vspace*{2mm}
\paragraph{\bf Interpretation from optimal control viewpoint}
The study of rare events from the optimal control viewpoint was pioneered in \cite{HS_2012, Hartmann_Banisch_Sarich_Badowski_S_2013} and further investigated in \cite{Hartmann_S_Zhang_2016, Hartmann_Richter_S_Zhang_2017}. Our goal in this part is  to rigorously reformulate
the effective conditioned process - constructed from committor function $q$ by \cite{Lu_Nolen_2015} - as
  a stochastic optimal controlled process,  in which the optimal feedback controls the original Markov process from one stable absorbing set $A$ to another absorbing set $B$ of the energy landscape $U_\nn$ with minimum cost. Before doing this,  we first reformulate the MEP finding problem from metastable states $a\in A$ to  $b\in B$ as a deterministic  optimal control problem by considering a controlled ODE with minimum cost \eqref{fw_c} in the infinite time horizon. {\blue This MEP is exactly the most probable path in the large deviation principle as noise $\eps \to 0$ in the Freidlin-Wentzell theory. Compared with the infinite optimal terminal time $T=+\8$, the corresponding stochastic optimal control problem at a fixed noise level $\eps>0$ is indeed easier in the sense that  the stopping time (the stochastic terminal time) is almost surely finite $\tau<+\8$. Thus the stochastic optimal control reinterpretation enables feasible computation of transition paths for practical scientific problems. } Precisely, for the system described by a Markov process  $\my_t$ on manifold $\nn$, e.g. \eqref{sde-y}, it will induce a measure on the path space $C([0,+\8); \nn)$, which can be regarded as a prior measure $P$. Then we need to construct a controlled Markov process $\tilde{\my}_t$ (see \eqref{oc_gamma}), which  induces a new measure $\tilde{P}$ on the path space. The additional control drives the trajectory $\tilde{\my}_t$ from $A$ to $B$ almost surely, while the transitions for the original process are  rare events.  From the stochastic optimal control viewpoint, we need to find a control function $\mv(\tilde{\my}_t)$ to realize the optimal change of measure from $P$ to $\tilde{P}$ such that the running cost (kinetic energy) and terminal cost (boundary cost hitting the absorbing set) are minimized. In Section \ref{sec_soc_th} and Theorem \ref{th_soc}, we will prove the forward committor function $q$ gives such an optimal control $\mv^*=2\eps \nabla \ln q$ that realizes the optimal change of measure on path space and thus realizes the transitions almost surely. These results also provide the basis for computing the  transition path through the Monte Carlo simulations for the controlled random walk and the local average mean path algorithm in the second part.

\vspace*{2mm}
\paragraph{\bf Data-driven solver for mean transition path on manifolds}
Our goal in this part is to take the advantage of the optimal control reinterpretation  above in its discrete analogies to design a data-driven solver, which efficiently finds  a mean transition path on a manifold suggested by point clouds. Given the point clouds $\{\my_i\}_{i=1:n}$, we construct a discrete Markov process  on $\{\my_i\}\subset \nn$ based on an approximated Voronoi tesselation for $\nn$, which incorporates both the equilibrium and manifold information.  The assigned transition probability between the nearest neighbor points (adjacent points identified by Voronoi tesselation) enables us to efficiently compute the discrete committor function $\{q_i\}_{i=1:n}$ and related quantities.  {\blue In Section \ref{sec_soc_D}, based on the constructed discrete Markov process, we derive an optimally controlled Markov process on point clouds with the associated controlled generator $Q^q$. More importantly,  the corresponding effective equilibrium under control is simply given by $\pi^e\propto q^2 \pi$ which preserves the detailed balance property. First, this  enables an efficient controlled Monte Carlo simulations for the new almost sure event in $O(1)$ time instead of the rare event in the original process. Moreover, 
adapting the idea from the finite temperature string method \cite{FTSM, Ren2005Transition}, we use  a Picard iteration to find the mean  transition path based on the  controlled Monte Carlo samplers. 
The numerical construction of the optimally controlled random walk   is given in Section \ref{sec_soc_D} while the  TPT analysis and the local average mean path algorithms based on the controlled process sampling  are given in Section \ref{sec_com}. We  apply this methodology to simulate the rare transitions on sphere/torus with Mueller potential and the transition between different isomers of an alanine dipeptide with MD simulation data. The developed mean transition path algorithm based on the controlled process performs very well for both synthetic and real world examples,  which highly coincide with the computed dominant transition path in TPT.
}

The rest of this paper is organized as follows. In Sections ~\ref{sec2}-\ref{sec3}, we will make the connections between the optimal control theory and the transition state theory and the transition path theory, respectively. In Section~\ref{sec4}, we present the constructions of the discrete original/controlled Markov process  on point clouds. In Section~\ref{sec_com}, we present the detailed algorithms for finding mean transition paths, while in Section~\ref{sec_num}   numerical examples are conducted to show the validity of the proposed algorithms.

\section{ Optimal control viewpoint for the transition state theory }\label{sec2}
In this section, we will first consider the typical energy landscape $U_\nn$ which indicates two metastable states and guides the Langevin dynamics on $\nn$ (see Section \ref{sec_energy}). Then we reformulate and solve the minimal energy path in the transition state theory from  the deterministic optimal control viewpoint in Section \ref{sec2.2}. In Section \ref{sec_soc}, we briefly review the basic concepts for a stochastic optimal control problem in the infinite time horizon.

\subsection{Energy landscape in the transition state theory}\label{sec_energy}
A chemical reaction from  reactants $a\in\nn$ through a transition state $c\in\nn$ to   products $b\in\nn$ can be described by the reaction coordinate $\my$ and  a path on the reaction coordinate $\my(t)\in\nn$ with a pseudo-time $t\in[0,T]$ and  $\my(0)=a$, $\my(T)=b$.
This chemical reaction can be characterized by an underlying potential $U_{\nn}(\my)$ in terms of the reaction coordinate $\my\in\nn$, which is {\blue assumed to be smooth enough} and has a few deep wells separated by high barriers. Assume $a$ and $b$ are two local minimums (attractors) with the basins of attractors {\blue $A,\,B\subset\nn, A\cap {B}=\emptyset$}   and $\max(U_\nn(a), U_\nn(b))< \min U(\pt A \cup \pt B) $.   {\blue The associated  minimal energy barrier such that $\my(0)=a$, $\my(T)=b$  is given by
\begin{equation}
\min_{\my(\cdot)} \bbs{ \max_{t\in[0,T]} U_\nn(\my(t)) }.
\end{equation}
Then  pick a path $\my^*(\cdot) \in \argmin_{\my(\cdot)}  \bbs{ \max_{t\in[0,T]} U_\nn(\my(t)) },$
and define
\begin{equation}\label{saddle}
c=\argmax_{\my^*(t), t\in[0,T]}U_\nn(\my^*(t)).
\end{equation}
This state $c$ achieves the minimal energy barrier and we assume this type of $c$ is unique, called transition state.
Moreover, assume $U_\nn$ is a Morse function and the Morse index at saddle point (transition state $c$) is $1$, i.e., there is only one negative eigenvalue   for the Hessian matrix of $U_\nn$ in the neighborhood of $c$.
 The energy barrier to achieve the chemical reaction from $a$ to $b$   is $U_\nn(c)-U_\nn(a)$. With this assumption, the minimal energy path can be uniquely found from the following least action problem \cite{Freidlin_Wentzell_2012}.}

By \cite{Freidlin_Wentzell_2012}, the minimal energy path is  given by the least action
\begin{equation}\label{fw_c}
\begin{aligned}
\inf_{T>0}~ \inf_{\my\in C[0,T]; \my(0)=a, \my(T)=b} \int_0^T \frac12 |\dot{\my}+\nabla U_\nn(\my)|^2	 \ud t.
\end{aligned}
\end{equation}
For notation simplicity,  from now on we use $U$ as the shorthand notation of $U_\nn$, $\nabla$ as shorthand notation of $\nabla_\nn$ and $\nabla \cdot$ as shorthand notation of   the surface divergence defined as $\nabla_\nn \cdot  \xi = \sum_{i=1}^d \tau_i^\nn \cdot \nabla_{\tau_i^\nn} \xi.$
In the case that $U$ is a double-well potential with $a,b$ being two local minimums, the minimal energy path (MEP) is given by the combination of the solutions $\my(t), t\geq 0$ to \cite{Freidlin_Wentzell_2012}
\begin{equation}\label{mep}
\begin{aligned}
\dot{\my} =  \nabla U(\my), \quad \my(-\8)=a, \, \my(+\8)=c,\\
\dot{\my}=-\nabla U(\my), \quad \my(-\8)=c, \, \my(+\8)=b.
\end{aligned}
\end{equation}

\subsection{Minimal energy path as a deterministic optimal control problem}\label{sec2.2}
One can recast the least action problem \eqref{fw_c} as a deterministic optimal control problem,  which  determines the optimal trajectory  $\my$, the feedback control $\mv$, and the  optimal terminal
time $T$. Since the original dynamics is translation invariant w.r.t.  time (stationary),
 we introduce the control variable $\mv=\mv(\my(t))$. The least action problem in \eqref{fw_c} is equivalent to the following optimal control problem with running cost $L=\frac12 |\mv|^2$
\begin{equation} \label{oc}
\begin{aligned}
 \gamma=&\inf_{T, \mv} \int_{T_1}^T \frac12 |\mv(\my(t))|^2 \ud t\\
& \st \dot{\my} = \mv(\my) - \nabla U(\my), \,\, t\in(T_1,T), \quad  \my(T_1)=a,\,\, \my(T)=c.
\end{aligned}
\end{equation}
Here $\mv(\my)$ belongs to the $C^1$ tangent vector field on $\nn$.
Define the augmented Lagrangian function as
\begin{equation}
\mathcal{L}(\my,\dot{\my},\mv,\lambda):=\frac12 |\mv|^2- \lambda \cdot( \dot{\my} - \mv  + \nabla U(\my)),
\end{equation}
where $\lambda $ is the Lagrange multiplier. Then the corresponding Hamiltonian is
\begin{equation}
\mathcal{H}(\my, \mv, \lambda):= \mathcal{L}-\dot{\my} \frac{\pt \mathcal{L}}{\pt \dot{\my}} =  \frac12 |\mv|^2 + \lambda \cdot\bbs{\mv - \nabla U(\my)}.
\end{equation}
Then calculating the first variation of $\int_0^T \mathcal{L} \ud t$ with respect to perturbations $\tilde{\my}, \tilde{\mv}, \tilde{\lambda}, \tilde{T}$ gives the  Euler-Lagrange equation  to \eqref{oc}
\begin{equation}
\frac{\pt \mathcal{L}}{\pt \my}-\frac{\ud}{\ud t} \bbs{\frac{\pt \mathcal{L}}{\pt \dot{\my}}}=0,\quad \frac{\pt \mathcal{L} }{\pt \mv} = 0, \quad \frac{\pt \mathcal{L}}{\pt \lambda}=0, \quad \mathcal{H}(T_*)=0,
\end{equation}
which can be further simplified as
\begin{equation}\label{el}
\begin{aligned}
&\dot{\mv} = \nabla^2 U(\my) \mv, \quad
\dot{\my} = \mv - \nabla U (\my),\quad \my(T_1) = a, \quad \my(T)=c,\\
&-\frac{|\mv|^2}{2} + \nabla U(\my) \cdot \mv \Big|_{t=T_*}=0.
\end{aligned}
\end{equation}
By the Pontryagin maximum principle, the Euler-Lagrange equation can be expressed in terms of the corresponding Hamiltonian $\mathcal{H}$
\begin{equation}
\frac{\ud}{\ud t } \my = \frac{\pt \mathcal{H}}{\pt \lambda}, \quad \frac{\ud}{\ud t} \lambda = - \frac{\pt \mathcal{H}}{\pt \my}, \quad \frac{\pt \mathcal{H}}{\pt \mv}=0, \quad \mathcal{H}(T_*)=0.
\end{equation}
It is easy to see  the Hamiltonian is a constant, so the last equation in \eqref{el} holds for all $0\leq t\leq T$, i.e.
\begin{equation}\label{control}
|\mv-\nabla U(\my)|^2 = |\nabla U(\my)|^2.
\end{equation}
The minimum value of $\int_0^T |\mv|^2 \ud t=2(U(c)-U(a))$ is achieved when the optimal control $\mv$ in \eqref{control} takes $\mv=2 \nabla U(\my)$ before the transition state $c$. Then after passing the transition state $c$, $\my(t)$ satisfies $\dot{\my} = - \nabla U (\my)$ with $\mv=0$ and  reaches $b$ at infinite time. {\blue We remark that since $a$ and $c$ are  steady states with $\nabla U \big|_{a,c}=0$, so the optimal terminal time above is indeed $\8$. Thus the optimal controlled path $\my(t)$ can not be achieved at finite time and should be understood in the limit sense, i.e., consider the optimal control problem \eqref{oc} with starting point $\my(T_1)=a', |a'-a|\leq \eps$, ending point  $\my(T_2)=c', |c'-c|\leq \eps$ and then taking the limit $\eps\to 0$, the boundary condition in \eqref{el}, shall be replaced by $\my(T_1)\to a, \, T_1\to -\8$, $\my(T_2)\to c,\,T_2 \to +\8.$
}

Recall the transition state (saddle point) $c$ in \eqref{saddle}.
Now we verify the optimality of the construction above as follows. For any path from $A$ to $B$, it must pass a point $\my_s$ such that $U(\my_s)\geq U(c)$. Denote $T_{\my_s}$ the time passing ${\my_s}$, then from  \eqref{oc}, we have
\begin{equation}
\begin{aligned}
\int_0^T \frac12 |\mv|^2 \ud t \geq& \int_0^{T_{\my_s}} \frac12 |\mv|^2 \ud t= \frac12 \int_0^{T_{\my_s}} |\dot{\my}-\nabla U(\my)|^2+ 2\int_0^{T_{\my_s}} \dot{\my} \cdot \nabla U(\my) \ud t\\
 \geq& 2 \int _0^{T_{\my_s}} \frac{\ud }{\ud t} U(\my) \ud t = 2(U({\my_s}) -U(a)) \geq 2( U(c)-U(a)).
\end{aligned}
\end{equation}
This inequality shows that the minimal energy barrier (a.k.a. value function) is $\gamma=2(U(c)-U(a))$ and the minimal energy path must pass through $c$.
 This completes the optimal control interpretation for the MEP given in \eqref{mep}.

{\blue Motivated by this deterministic optimal control problem, the stochastic control problem in Theorem \ref{th_soc} later is the stochastic version of the  control problem \eqref{oc} with a fixed noise $\eps>0$; see \cite{fleming06} and  Remark \ref{rem_sam} after Theorem \ref{th_soc}.  For a fixed noise level $\eps>0$, we will  introduce a stopping time $\tau$, which is finite almost surely by positive recurrence.   Thus the optimal transition path can be achieved at finite time almost surely. At the finite noise level, one can directly use committor function $q$ defined in \eqref{dn} instead of the quasi-potential in \cite{Freidlin_Wentzell_2012}. Solving committor function is a linear problem while in \cite{Freidlin_Wentzell_2012}, MEP is formulated as an exit time problem computed by finding out a quasi-potential from a Hamilton-Jacobi equation.
}

\subsection{General stochatic optimal control problems with terminal cost }\label{sec_soc}
In general, one can consider a stochastic optimal control problem with some running cost function $L(t,\my_t)$ and terminal cost function $g(T, \my_T)$, where have used the convention $\my_t:=\my(t)$ in stochastic analysis.   Especially, we are interested in the optimal control for a stationary (w.r.t. time) Markov process and the terminal time being the stopping time when the SDE solution hitting some closed set $B$, i.e. $\tau:=\inf\{t\geq 0; \my_t\in \bar{B} \}$.  In this case, the terminal cost function $g$ is also called boundary cost function, to be specific in Theorem \ref{th_soc}.

Given initial  probability measure $\mu_0$ on $\nn$ concentrating on the local minimums $a$ of the potential $U$, consider  the stochastic  optimal control problem in the infinite time horizon with running cost function $\frac12|\mv(\my_t)|^2$ and boundary cost function $g(\my_\tau)$
\begin{equation}\label{soc}
\begin{array}{ll}
\gamma =\inf_{\mv } \mathbb{E}\left\{\int_{0}^{\tau} \frac{1}{2}\left|\mv\left( \my_t\right)\right|^{2} d t + \chi_{\tau<\8}g(\my_\tau) \right\} \\
\st \ud  {\my}_t=\bbs{-\nabla U\left( \my_t\right)+\mv\left(\my_t\right)} \ud t+ \sqrt{2\eps} \ud_\nn B_t, \quad t\in(0,\tau), \quad  \quad \my_t|_{t=0} \sim \mu_{0}.
\end{array}
\end{equation}
Here $\chi$ is the indicator function, $|\mv|$ is the length of $\mv$  in $\bR^\ell$ and $\ud_\nn B_t$ is a shorthand notation for the Brownian motion on manifold $\nn$ in the sense of \eqref{sde-y}, which will be used in the following context.
In Nelson's theory of stochastic mechanics, $\mv$ can be regarded as an average velocity and the running cost function  $L=\frac12 |\mv|^2$ is the classical action integrand including only kinetic energy.
The obtained optimal control $\mv(\my)$ is  called stationary Markov control policy. Since the original Markov process is on a closed manifold $\nn$, we will always has the positive recurrence property provided the landscape is continuous.   From now on, we focus on the case $\tau<\8$, a.s..

With the small parameter $\eps$, recall the original  Markov process $Y_t$ on manifold $\nn$ without control has the corresponding generator  $Qf=\eps\Delta f - \nabla U \cdot \nabla f.$  The control $\mv$ in the stochastic control problem \eqref{soc} can be regarded as an additional driven force to the original Markov process.

\section{Optimal control viewpoint for the transition path theory for  the continuous Markov process}\label{sec3}

In this section, the main goal is to give a stochastic optimal control interpretation for the transition path theory. We will first review the transition path theory in Section \ref{sec.rTPT}. Then in Section \ref{sec_soc_th}, we prove the committor function in the transition path theory leads to a stochastic optimal control with which the controlled Markov process  realizes the transitions from $A$ to $B$ almost surely. \subsection{Review of the transition path theory}\label{sec.rTPT}
Now we review and explain some concepts in the transition path theory including the committor function, the effective transition path process, and the density/current of transition paths; see original work \cite{weinan2006}.

\subsubsection{Committor function}
We  start from the original Markov process $Y_t$ with generator
\begin{equation}\label{gQ}
Qf= \eps\Delta f - \nabla U \cdot \nabla f.
\end{equation}
Let $A$ and $B$ be two disjoint absorbing sets of attractors $a, b$. To study the conditioned process with the conditions on paths starting from $A$ then ending in $B$, one should find an appropriate excessive function $q$ and calculate the transition probabilities of the conditioned process by using Doob $h$-transform via $q$.
\begin{rem}
As mentioned in Section \ref{sec_soc}, we know the SDE solution $Y_t$ hits $\overline{A\cup B}$ at a finite time due to  the positive recurrence of the process $Y_t$ on the closed manifold $\nn$. For an unbounded domain, the recurrence can be ensured by  some specific condition. {\blue For instance, we assume there exists $R_0>0$ such that for any $r>R_0$,
\begin{equation}\label{recur}
\nabla_\mathbf{n} U \Big|_{|\my|=r} > \frac{c}{r}, \quad \text{ for some constant } c>\eps d +1.
\end{equation}
Here $|\my|=r$ is the geodesic ball on $\nn$ and $\mathbf{n}\in T_\nn$ is the outer normal vector of the ball. Consequently, $\frac{|\my|^2}{2}$ is a Lyapunuov function such that
$$Q\bbs{\frac{|\my|^2}{2}} = \eps d -\nabla U \cdot \my \leq -1  \quad \text{ for }|\my|>R_0.$$   Applying \cite[Cor 2.4.2, Cor 2.4.3]{krylov} with this Lyapunov function, we know condition \eqref{recur} ensures  the existence of an invariant measure for process $Y_t$. This invariant measure is also unique by \cite[Theorem 4.1.6]{krylov}. Using the same Lyapunov function in \cite[Theorem 3.9]{Khasminskii}  (see also \cite[Theorem 3]{Veretennikov}), condition \eqref{recur} also ensures the positive recurrence in $\mathbb{R}^d$.
}
\end{rem}
Define the stopping time $\tau_B:=\inf \{t\geq 0; Y_t\in \bar{B}\}$ (resp. $\tau_A$) of process $Y_t$ when it hits $B$ (resp. $A$).
The probability for the paths hitting $B$ before $A$ is given by the forward committor function $q(\my)$, a.k.a.  harmonic potential, which is the solution of
\begin{equation}\label{dn}
Qq(\my)=0, \quad \my \in(\overline{A\cup B})^c
\end{equation}
with the Dirichlet boundary conditions
\begin{equation}\label{dbc}
q(\my)=0,\, \my\in \bar{A}, \qquad q(\my)=1,\,\my\in \bar{B}.
\end{equation}
{\blue \begin{lem}\label{lem:recur}
The solution $q(\my)$ to \eqref{dn} with \eqref{dbc} satisfies $0<q(\my)<1, \, \my\in (\overline{A\cup B})^c.$
\end{lem}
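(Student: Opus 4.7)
The plan is to apply the strong maximum principle to the elliptic operator $Q=\eps\Delta-\nabla U\cdot\nabla$ on the smooth closed manifold $\nn$. First I would observe that $Q$ is a second-order linear operator with smooth coefficients and \emph{no zeroth-order term}, and that it is uniformly elliptic on any compact subset of $(\overline{A\cup B})^c$ (with ellipticity constant $\eps>0$). Since $\nn$ is a smooth closed submanifold and $U$ is smooth, standard elliptic regularity guarantees that the Dirichlet problem \eqref{dn}--\eqref{dbc} admits a classical solution $q\in C^2((\overline{A\cup B})^c)\cap C(\overline{(\overline{A\cup B})^c})$, so maximum principles apply in their classical form.

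Next, I would establish the weak bound $0\le q(\my)\le 1$. Since $q\equiv 0$ on $\bar A$ and $q\equiv 1$ on $\bar B$, both the constant function $0$ and the constant function $1$ are in $\ker Q$ and bracket $q$ on the boundary $\partial(\overline{A\cup B})^c=\partial A\cup\partial B$. The weak maximum principle for operators without zeroth-order term then forces $0\le q\le 1$ throughout $(\overline{A\cup B})^c$.

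For the strict inequality, I would invoke the strong maximum principle: if $q$ attains the value $0$ or the value $1$ at an interior point of $(\overline{A\cup B})^c$, then $q$ must be identically constant on the connected component containing that point. Because $(\overline{A\cup B})^c$ is open with $\partial A$ and $\partial B$ both nonempty (as $A$ and $B$ are the basins of the distinct minima $a,b$ with $A\cap B=\emptyset$), and because each connected component of $(\overline{A\cup B})^c$ must have points of its boundary in at least one of $\partial A,\partial B$ --- and, by positive recurrence (so every point can access both $A$ and $B$), in both --- the function cannot simultaneously equal $0$ on part of the boundary and $1$ on another part if it were constant. This yields the contradiction and gives $0<q<1$ strictly on $(\overline{A\cup B})^c$.

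The only delicate point is verifying that each connected component of $(\overline{A\cup B})^c$ does touch both $\partial A$ and $\partial B$; if some component touched only $\partial A$, then $q\equiv 0$ on the closure of that component would be consistent with the PDE. A probabilistic alternative, which sidesteps this topological check, is to use the representation $q(\my)=\bP_\my(\tau_B<\tau_A)$ (a consequence of \eqref{dn}--\eqref{dbc} and It\^o's formula applied up to the stopping time $\tau_A\wedge\tau_B$, which is a.s.\ finite by the positive recurrence discussed in the preceding remark). Then the support theorem for nondegenerate diffusions on $\nn$ implies that from any $\my\in(\overline{A\cup B})^c$ the process can be steered to a neighborhood of either $a$ or $b$ with positive probability in finite time, so both $\bP_\my(\tau_B<\tau_A)>0$ and $\bP_\my(\tau_A<\tau_B)>0$, giving $0<q(\my)<1$. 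I would use whichever route, analytic or probabilistic, is most consistent with the rest of the paper's exposition.
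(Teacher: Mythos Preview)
Your proposal is correct and follows essentially the same route as the paper: existence of a classical solution on the closed manifold followed by the strong maximum principle for $Q=\eps\Delta-\nabla U\cdot\nabla$. The paper's own proof is a terse two-line version of exactly this argument; your write-up is more careful (you flag the connected-component issue and offer a probabilistic alternative via the support theorem), but the underlying strategy is identical.
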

\begin{proof}
Since $\nn$ is a closed manifold, $U(\my)$ smooth enough and $\bar{A}\cap \bar{B}=\emptyset,$ there exists a solution $q(\my)\in C^2((\overline{A\cup B})^c)\cap C(\overline{A^c\cap B^c})$ to \eqref{dn} with \eqref{dbc}. Then by the strong maximum principle, we conclude $0<q(\my)<1, \, \my\in (\overline{A\cup B})^c.$
\end{proof}
}

As an important consequence, the density and the current of transition paths can be calculated using the committor function; see detailed revisit in Section \ref{sec_current_c}.

\subsubsection{Generator for the conditioned process}
To describe the conditioned process with the conditions that paths starting from $A$ then ending in $B$,
 \cite{Lu_Nolen_2015} characterized the selection of the reactive paths  coming from $A$ and then hitting $B$ by using  the probability measure on the path space  such that $\tau_A>\tau_B$.

  The associated conditioned process, called the transition path process,  is denoted as $Z_t$.  For $Z_0=\my_0\in (\overline{A\cup B})^c$, the generator of this conditioned process $Z_t$ can be  described using the Doob $h$-transform.
 Precisely, using committor function $q$ as the excessive function and by the Doob $h$-transform, the generator for conditioned process $Z_t$ is
\begin{equation}\label{Doob}
Q^q f=\frac{1}{q}Q(qf)=Qf+\frac{2\eps \nabla q}{q} \cdot \nabla f.
\end{equation}
Since $q=0$ in $\bar{A}$,  a singular drift term prevents $Z_t$ hitting $A$ and also  pushes  $Z_t \in \pt A$ into $(\overline{A\cup B})^c$.
 For the delicate case $Z_0$ starting from $\pt A$ with an appropriate initial law on $\pt A$,   \cite[Theorem  1.2]{Lu_Nolen_2015} proved that the conditioned process $Z_t$ with the augmented filtration  is same  in law as the $k$-th transition path process exiting from $A$ then hitting $B$ defined in \cite{weinan2006, SHV_2006, E_Vanden-Eijnden_2010}. More precisely, the initial and end distribution for $Z_t$, a.k.a.  reactive exit and entrance distribution $\nu_0, \nu_1$, can be calculated by the Dirichlet to Neumann map of the elliptic equation for committor function \eqref{dn} \cite[Proposition 1.5]{Lu_Nolen_2015}.

In Section \ref{sec_soc_th}, we will prove the resulting conditioned process (the transition path process) $Z_t$  can be regarded as the original process with an additional control $\mv=\frac{2\eps \nabla q}{q}$. This control, indeed optimal control, together with the original landscape $U$, leads to an effective potential $U^e:=U-2\eps \ln q$, which is $+\8$ for $\my\in \bar{A}$ and $U$ for $\my\in\bar{B}$. This effective potential guides the associated SDE of $\tilde{Y}_t$   to the absorbing set $B$ before hitting  $A$.

\subsubsection{Density and current of transition paths}\label{sec_current_c}
Next, using the conditioned process $Z_t$, which is also the controlled process $\tilde{Y}_t$ in \eqref{oc_gamma}, and its generator $Q^q$, we sketch the derivation of the current of transition paths. Recall the equilibrium density of the original Markov process $Y_t$ is $\pi\propto e^{-\frac{U}{\eps}}$. From \cite[Proposition 1.9]{Lu_Nolen_2015}\cite[Proposition 2]{weinan2006}, the density of transition paths is
 \begin{equation}\label{rhoR}
 \rho^R(\my)= \pi(\my) q(\my)(1-q(\my)).
 \end{equation}
 Then some elementary calculations show that
 \begin{equation}
 (Q^q)^* \rho^R(\my) = 0 , \quad \my \in  (\overline{A\cup B})^c .
 \end{equation}
 From \eqref{Doob} and \eqref{gQ},
 \begin{align}\label{Doob2}
0=& (Q^q)^* \rho^R = Q^* \rho^R -\nabla \cdot \bbs{\rho^R \frac{2\eps \nabla q}{q}}=\eps \nabla\cdot \bbs{\pi \nabla \frac{\rho^R}{\pi}-\rho^R \frac{2\nabla q}{q}}.
 \end{align}
 This divergence form, together with \eqref{rhoR}, gives the current of transition paths from $A$ to $B$ (upto a factor $\eps$)
 \begin{equation}\label{cc}
 J_R:=- \pi \nabla \frac{\rho^R}{\pi}+\rho^R \frac{2\nabla q}{q} = -\pi \nabla[q(1-q)] + 2\pi (1-q)\nabla q =\pi \nabla q.
 \end{equation}
 One can  directly verify there is an  equilibrium such that
 \begin{equation}\label{def:pie}
(Q^q)^* \pi^e = 0, \qquad  \pi^e := e^{-\frac{U^e}{\eps}} = \pi q^2,
 \end{equation}
which vanishes at $A$. {\blue However, we point out $\rho^R$ is not an equilibrium although $(Q^q)^* \rho^R=0$ inside  $(\overline{A\cup B})^c$. This involves a boundary measure at $\pt B$ and we will only discuss details for the discrete case in Section \ref{sec422}.
}

\subsection{Stochastic optimal control interpretation of the committor function}\label{sec_soc_th}
In this subsection, we will prove that the committor function $q$ gives a stochastic optimal control such that the controlled Markov process realizes the transition from $A$ to $B$ with minimum running and terminal cost. We will first illustrate the idea of optimal change of measure in an abstract measurable space, then prove the stochastic optimal control interpretation  in Theorem \ref{th_soc}.

\subsubsection{Duality between the relative entropy and the Helmholtz free energy}
It is well known that the canonical ensemble is closely related to the optimal change of measure for the Helmholtz free energy.  More precisely, let $(\Omega, \mathcal{F})$ be a measurable space and $\bP(\Omega)$ is the family of  probability measures on $\Omega$.
Denote Hamiltonian $H\in \bR$ as a measurable function on $\Omega$.  For a reference measure (a.k.a.  prior measure) $P\in \bP(\Omega)$ and any $\beta>0$, we define the Helmholtz free energy of  Hamiltonian $H$ with respect to   $P$ as
\begin{equation}
F(H):= - \frac1\beta  \ln \bbs{\int_\Omega e^{-\beta H(\omega)}  \ud P(\omega)}\in [-\8, +\8).
\end{equation}
For any other measure $\tilde{P}\in \bP(\Omega)$ which is absolutely  continuous w.r.t. $P$, denote $\text{KL}(\tilde{P}||P)=\int_\Omega\ln \bbs{\frac{\ud \tilde{P}}{\ud P}} \ud \tilde{P}$  the relative entropy with respect to $P$. Then we have the following Legendre-type transformations and duality in statistical mechanics; c.f. \cite{deuschel2001large}.

(i) For any measure $\tilde{P}\ll P$ 
\begin{equation}
-\frac{1}{\beta}\text{KL} (\tilde{P}||P)= \sup_H \left\{  \int_\Omega H(\omega) \ud \tilde{P}(\omega) -  F(H)   \right\};
\end{equation}

(ii) for any bounded function $H(\omega)$ 
\begin{equation}
F(H)=\inf_{\tilde{P}\ll  P} \left\{\int_\Omega H(\omega) \ud \tilde{P}(\omega) + \frac1\beta  \text{KL} (\tilde{P}||P)\right\}=\inf_{\tilde{P} \ll P} \left\{\int_\Omega \bbs{H(\omega) +\frac1\beta \ln \frac{\ud \tilde{P}(\omega)}{\ud P (\omega)} } \ud \tilde{P}(\omega)  \right\}.
\end{equation}

In Theorem \ref{th_soc} later, {\blue we will see  the second Legendre-type transformation (ii) is still true for a Hamiltonian $g$ defined in \eqref{ham}  and we use it} for probability measures (induced by coordinate processes)  on path space $\Omega=C([0,+\8); \nn)$ to prove the optimality. To show the idea of the proof, if $\ud P = \rho_0 \ud x$ and $\ud \tilde{P} =\rho \ud x$, the optimal density $\rho^*$ in the transformation is achieved when
\begin{equation}
e^{ -\beta H-1+\lambda}=\frac{\rho^*}{\rho_0}, \qquad \int \rho^* =1 = e^{\lambda-1} \int e^{-\beta H}  \rho_0 \ud x,
\end{equation}
where $\lambda$ is the Lagrange multiplier to ensure $\rho$ is a probability density.
Then we have
\begin{equation}
\int H \rho^* \ud x + \frac1\beta \int \ln \frac{\rho^*}{\rho_0} ~\rho^* \ud x = \frac{\lambda -1}\beta = F(H).
\end{equation}

\subsubsection{Committor function gives the optimal control in infinite time horizon}

{\blue In general, given an arbitrary control field
in the gradient form, $\mv = 2\eps \nabla \ln h$ for some function $h$,  the generator for the controlled process is not exactly the Doob $h$-transform. Indeed, by Ito's formula, the generator under the control $\mv$ is
\begin{equation}
Q^h= Q+\mv \cdot \nabla= Q + 2\eps \nabla \ln h \cdot \nabla.
\end{equation}
On the other hand, by elementary calculations, the Doob $h$-transform satisfies
the following identity
\begin{equation}
\frac{1}{h} Q(h f) =  Qf+ f \frac{Q h}{h} + 2\eps \nabla \ln h \cdot \nabla f = Q^h f + f \frac{Q h}{h}
\end{equation}
for any test function $f$.
We can recast the under controlled generator $Q^h$ as
\begin{equation}
Q^h f =\frac{1}{h} Q(h f) - f \frac{Q h}{h}.
\end{equation}

}

{\blue Now we give a theorem on the optimality of the control $\mv=\frac{2\eps\nabla q}{q}=2\eps\nabla \ln q$ in an infinite time stochastic optimal control problem,  where $q$ is the committor function solving \eqref{dn} with boundary condition \eqref{dbc}. As a consequence of the optimal control $\mv=2\eps\nabla \ln q$, we recover
$
\frac{1}{q} Q(q f) = Q^q  f.
$
}

\begin{thm}\label{th_soc}
Assume the original Markov process $Y_t$ has generator $Qf=\eps\Delta f -\nabla U \cdot \nabla f,$ $\eps>0$.   The forward commitor function $q$ in \eqref{dn} gives an optimal control $\mv^*=\frac{2\eps\nabla q}{q}$ in the sense that it drives the controlled process $\tilde{Y}_t$ starting from $\vec{y} \in (\overline{A\cup B}) ^c$ to the set $B$ before hitting $A$ with the least action
\begin{equation}\label{oc_gamma}
\begin{aligned}
\gamma(\vec{y}):=& \min_{\mv\in\mathcal{A}}\bE_P\left[\int_0^\tau \frac1{2} |\mv(\tilde{Y}_s)|^2 \ud s+g(\tilde{Y}_\tau)\right]\\
& \st ~ \ud \tilde{Y}_t = (-\nabla U(\tilde{Y}_t)+ \mv(\tilde{Y}_t)) \ud t + \sqrt{2\eps} \ud_\nn B_t, \,\, \tilde{Y}_0 = \vec{y},
\end{aligned}
\end{equation}
where $\tau:=\inf\{t\geq 0; \tilde{Y}_t\in \overline{A\cup B}  \}$  is the stopping time,
$
 g(x):=\left\{ \begin{array}{c}
+\8, \quad \text{ in } \bar{A},\\
0, \quad\quad\, \text{ in } \bar{B},
\end{array} \right.
$
 and the admissible control belongs to
\begin{equation}\label{AA}
\mathcal{A}:=\{\mv \in T_\nn; ~  \bE_P \bbs{e^{ \int_0^\tau \frac{1}{4\eps} |\mv(\tilde{Y}_s)|^2 \ud s }} <\8  \}.
\end{equation}
 Moreover, $\gamma(\my)=-2\eps \ln q(\my)$ and the optimal control $\mv^*$ leads to an effective potential $U^e:=U-2\eps\ln q$ for the controlled Markov process $\tilde{Y}_t$ with the generator $Q^q$ defined in \eqref{Doob}.
\end{thm}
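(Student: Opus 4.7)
The plan is to rewrite the cost in \eqref{oc_gamma} as a sum of a terminal cost and a relative entropy with respect to the prior path measure $P$ induced by the uncontrolled diffusion $Y_t$ starting at $\vec y$, and then to apply the Legendre duality (ii) from Section~\ref{sec_soc_th} to evaluate the minimization as a Helmholtz free energy that admits a closed form in terms of $q$.

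\textbf{Step 1 (Girsanov identifies running cost with relative entropy).} For any $\mv\in\mathcal{A}$, the exponential condition \eqref{AA} is exactly Novikov's criterion up to the stopping time $\tau$, so Girsanov's theorem yields a probability measure $\tilde P$ on path space with
\begin{equation}
\left.\frac{\ud\tilde P}{\ud P}\right|_{\mathcal{F}_\tau}=\exp\!\left(\int_0^\tau\tfrac{\mv(Y_s)}{\sqrt{2\eps}}\cdot\ud B_s-\tfrac{1}{4\eps}\int_0^\tau|\mv(Y_s)|^2\,\ud s\right),
\end{equation}
under which the coordinate process is a weak solution of the controlled SDE in \eqref{oc_gamma}. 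Re-expressing the stochastic integral via the $\tilde P$-Brownian motion flips the sign of the quadratic variation term, giving
\begin{equation}
2\eps\,\text{KL}(\tilde P\,\|\,P)=\bE_{\tilde P}\!\left[\int_0^\tau\tfrac12|\mv(\tilde Y_s)|^2\,\ud s\right],
\end{equation}
so that the running cost equals $2\eps$ times a relative entropy. Positive recurrence on the closed manifold guarantees $\tau<\infty$ almost surely, so $g(\tilde Y_\tau)$ is well defined.

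\textbf{Step 2 (duality yields the value function).} Choosing $\beta=1/(2\eps)$ and Hamiltonian $H(\omega)=g(\omega_\tau)$, the duality (ii) in Section~\ref{sec_soc_th} recasts the problem as
\begin{equation}
\gamma(\vec y)=\inf_{\tilde P\ll P}\bE_{\tilde P}\!\left[g(\tilde Y_\tau)+2\eps\ln\tfrac{\ud\tilde P}{\ud P}\right]=-2\eps\ln\bE_P\!\left[e^{-g(Y_\tau)/(2\eps)}\right].
\end{equation}
Since $e^{-g/(2\eps)}=\mathbf 1_{\bar B}$ and $\tau<\infty$ $P$-a.s., the right-hand expectation is $\bP(Y_\tau\in\bar B\mid Y_0=\vec y)=q(\vec y)$ by definition of the committor, giving $\gamma(\vec y)=-2\eps\ln q(\vec y)$. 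The minimizer in the duality is the Gibbs-type measure $\ud\tilde P^*=q(\vec y)^{-1}\mathbf 1_{\{Y_\tau\in\bar B\}}\,\ud P$, i.e., $Y$ conditioned on exiting through $\bar B$.

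\textbf{Step 3 (optimal drift via Doob's $h$-transform).} By Doob's $h$-transform with the harmonic function $h=q$ (harmonic in $(\overline{A\cup B})^c$ by \eqref{dn}), the conditioned process $\tilde P^*$ has generator $Q^qf=q^{-1}Q(qf)=Qf+(2\eps\nabla q/q)\cdot\nabla f$, using $Qq=0$. Comparing with the generic controlled generator $Q+\mv\cdot\nabla$ identifies the optimal feedback $\mv^*=2\eps\nabla q/q=2\eps\nabla\ln q$, and absorbing it into the drift $-\nabla U$ gives the effective potential $U^e=U-2\eps\ln q$. Admissibility $\mv^*\in\mathcal{A}$ follows from the smoothness and strict positivity of $q$ in $(\overline{A\cup B})^c$ (Lemma~\ref{lem:recur}) together with the fact that $\tilde Y_t$ stays away from $\bar A$ under $\tilde P^*$.

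\textbf{Main obstacle.} The principal subtlety is that $g\equiv+\infty$ on $\bar A$, so the Hamiltonian in the duality is unbounded and (ii) is not directly applicable. My plan is to approximate $g_n=n\mathbf 1_{\bar A}$, apply the bounded duality for each $n$, and pass to the limit using monotone convergence together with the observation that any $\tilde P$ with finite cost must satisfy $\tilde P(Y_\tau\in\bar A)=0$. Equivalently, one must verify that the singular drift $2\eps\nabla\ln q$ repels $\tilde Y_t$ from $\partial A$ so the hitting probability of $A$ vanishes---an intrinsic property of the Doob transform that nevertheless needs care because $q$ vanishes on $\partial A$ and $\nabla\ln q$ blows up there; the same non-hitting estimate is what lets one check Novikov's condition for $\mv^*$ and close the circle.
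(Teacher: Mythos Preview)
Your proposal is correct in spirit and shares the paper's Girsanov/relative-entropy framework, but the execution of the attainment step differs in a meaningful way. You invoke the Legendre duality (ii) directly and then propose an approximation $g_n=n\mathbf 1_{\bar A}$ to get around the unbounded Hamiltonian. The paper sidesteps this obstacle entirely: it uses only Jensen's inequality (which needs no boundedness of $H$) to obtain the lower bound $\gamma(\vec y)\ge -2\eps\ln q(\vec y)$, and then establishes attainment by the verification-theorem route---checking that $\gamma=-2\eps\ln q$ solves the HJB equation and, via an explicit It\^o computation, that with $\mv^*=-\nabla\gamma$ the random variable $\tilde H:=g(\tilde Y_\tau)+\int_0^\tau\sqrt{2\eps}\,\mv^*\!\cdot\!\ud_\nn B_s+\tfrac12\int_0^\tau|\mv^*|^2\,\ud s$ is deterministic, so equality holds in Jensen. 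This also gives a clean admissibility check: since $\tilde H$ is deterministic one reads off $\bE_P\bigl(e^{-\beta\tilde H}\bigr)=q(\vec y)\,e^{\beta}<\infty$, hence $\mv^*\in\mathcal A$. Your admissibility argument, relying on the qualitative fact that the Doob transform repels $\tilde Y_t$ from $\partial A$, is less direct and would still need to be quantified; the paper's HJB/It\^o verification delivers this for free. In short: same framework and same conclusion, but the paper trades your monotone-limit argument for a Jensen-plus-verification argument that avoids the unbounded-$H$ issue and yields Novikov's condition for $\mv^*$ without any extra non-hitting estimate.
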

\begin{proof}
Choose $\Omega=C([0,+\8); \nn)$, with the product $\sigma$-algebra, as our measurable space \cite{Varadhan-Stoc}. Any element $\my\in\Omega$ gives a coordinate process $Y_t(\my):= \my(t)\in \nn, \, t\geq 0.$

First, recall for a closed manifold,  the stopping time $\tau:=\inf\{t\geq 0; Y_t\in \overline{A\cup B}  \}<\8, a.s.$. From \cite[Section 6.2.1]{Evans-SDE}, the stochastic characterization of  committor function $q$ in \eqref{dn} can be expressed as
\begin{equation}\label{qq}
q(\vec{y})=\bE_P(f(Y_\tau)), \quad \vec{y}\in \overline{A\cup B}^c
\end{equation}
with  function $f(\mz)$ on $\overline{A\cup B}$, $f=0,\, \mz\in \bar{A}$ while $f=1,\,\mz\in \bar{B}.$ Here the expectation is taken under the probability measure $P$ (called reference measure) on the path space $\Omega$ associated with all realizations of the original SDE of $Y_t$ starting from $\vec{y} \in (\overline{A\cup B}) ^c$. Define $\beta := \frac1{2\eps}$ and function
 \begin{equation}\label{ham}
 g(x)= - \frac{1}{\beta}\ln f =\left\{ \begin{array}{c}
+\8, \quad \text{ in } \bar{A};\\
0, \quad\quad\, \text{ in } \bar{B}.
\end{array} \right.
\end{equation}
Then choose  Hamiltonian $H(\my):= g(Y_\tau(\my))$ for $\my\in\Omega$.
Since the original SDE for $Y_t$ is translation invariant with respect to time (stationary w.r.t. time), so the control function $\mv(Y_t)$ (stationary Markov control) does not explicitly depend on $t$ and we can take the starting time as $t=0$ without loss of generality.   We consider the optimal control problem \eqref{oc_gamma}
with the admissible control in $\mathcal{A}$.
$\mathcal{A}$ in \eqref{AA} is   the well known Novikov condition in the Girsanov's transformation, which ensures the almost sure positivity of the  Radon-Nikodym derivative in \eqref{tm58}. It is easy to see $\mv=0$ belongs to $\mathcal{A}$ so $\mathcal{A}\neq \emptyset.$ However from the definition of $g$, $g(Y_\tau)=+\8$ when $Y_\tau\in \bar{A}$, so $\mv=0$ is not an optimal control. We also remark that the selection effect due to $g$, i.e. the controlled system will hit $B$ before $A$, can be equivalently added to the admissible control set \eqref{AA}.

Second, we find the minimizer $\gamma$ (a.k.a value function) and the corresponding optimal control $\mv^*$.
Let $\tilde{P}$ be the   probability measure  on $\Omega$   associated with all realizations of the SDE of $\tilde{Y}_t$ with control $\mv$.  By the  Girsanov's theorem, $Y_t$ is a Brownian motion under measure $P$ and $\tilde{Y}_t$ is a Brownian motion under measure $\tilde{P}$. As a consequence,
\begin{equation}\label{girs}
\bE_P(e^{-\beta g(Y_\tau)})=\bE_{\tilde{P}} (e^{-\beta g(\tilde{Y}_\tau)}) = \bE_P\bbs{e^{-\beta g(\tilde{Y}_\tau)} \frac{\ud \tilde{P}}{\ud P}},
\end{equation}
where the Radon-Nikodym derivative is given by  \cite[Theorem 6.2]{Varadhan-Stoc}
\begin{equation}\label{tm58}
\frac{\ud \tilde{P}}{\ud P}= e^{- \beta \int_0^\tau \sqrt{2\eps}  \mv(\tilde{Y}_s) \cdot\ud_\nn B_s -\frac\beta{2} \int_0^\tau |\mv(\tilde{Y}_s)|^2 \ud s} >0, \quad P\text{-a.s.}
\end{equation}
and the sign in front of $\ud_\nn B$ is negative following the convention. In short, we have
\begin{equation}\label{changeP}
\bE_P(e^{-\beta H(\my)})=\bE_P(e^{-\beta g(Y_\tau)}) = \bE_P\bbs{e^{-\beta\big(g(\tilde{Y}_\tau) +\int_0^\tau \sqrt{2\eps}  \mv(\tilde{Y}_s) \cdot \ud_\nn B_s +\frac1{2} \int_0^\tau |\mv(\tilde{Y}_s)|^2 \ud s \big) }}.
\end{equation}
Then by Jensen's inequality and \eqref{girs},
\begin{equation}\label{jensen}
e^{- \beta \bE_P \bbs{g(\tilde{Y}_\tau) +  \int_0^\tau \frac1{2} |\mv(\tilde{Y}_s)|^2 \ud s}} \leq \bE_P \bbs{e^{-\beta g(\tilde{Y}_\tau)} \frac{\ud \tilde{P}}{\ud P}} = \bE_P(e^{-\beta g(Y_\tau)}).
\end{equation}
Here the equality is achieved if and only if $g(\tilde{Y}_\tau) +\int_0^\tau \sqrt{2\eps} \mv(\tilde{Y}_s)\cdot \ud_\nn B_s
+  \int_0^\tau \frac1{2} |\mv(\tilde{Y}_s)|^2 \ud s$ is deterministic.
{\blue Using Lemma \ref{lem:recur} and \eqref{qq}, we know
\begin{equation}\label{eq:q0}
q(\vec{y})=\bE_P(f(Y_\tau))= \bE_P(e^{-\beta g(Y_\tau)})>0, \quad \vec{y}\in (\overline{A\cup B})^c.
\end{equation}
Thus the RHS of \eqref{jensen} is always positive. Taking the logarithm to both sides, we have
\begin{equation}\label{tm322}
-\beta \bE_P \bbs{g(\tilde{Y}_\tau) +  \int_0^\tau \frac1{2} |\mv(\tilde{Y}_s)|^2 \ud s} \leq  \ln  \bbs{\bE_P \bbs{e^{-\beta g(\tilde{Y}_\tau)} \frac{\ud \tilde{P}}{\ud P}}}=\ln  \bbs{\bE_P(e^{-\beta g(Y_\tau)})}.
\end{equation}
Notice if the LHS of \eqref{jensen} is zero, \eqref{tm322} still holds since $-\8$ is always smaller than a finite number.
}
Therefore, we obtain
\begin{equation}
 \bE_P \bbs{g(\tilde{Y}_\tau) +  \int_0^\tau \frac1{2} |\mv(\tilde{Y}_s)|^2 \ud s} \geq - \frac1\beta \ln  \bbs{\bE_P \bbs{e^{-\beta g(\tilde{Y}_\tau)} \frac{\ud \tilde{P}}{\ud P}}}= - \frac1\beta \ln\bbs{\bE_P(e^{-\beta g(Y_\tau)}) },
\end{equation}
 which, together with  \eqref{eq:q0}, gives
\begin{equation}
\gamma(\vec{y}) \geq  - \frac1\beta \ln\bbs{\bE_P(e^{-\beta g(Y_\tau)}) }=- \frac1\beta \ln q(\vec{y}), \quad \vec{y}\in (\overline{A\cup B})^c.
\end{equation}
Furthermore, the verification theorem \cite[IV.3, Theorem 5.1]{fleming06} shows that the equality is indeed  achieved
$\gamma(\vec{y}) = -\frac1\beta \ln q(\vec{y})$. Actually, $\gamma(\vec{y})$ satisfies the Hamilton-Jacobi-Bellman (HJB) equation
\begin{equation}\label{HJB}
\eps\Delta \gamma -\frac{1}{2}|\nabla \gamma|^2 -\nabla U \cdot \nabla\gamma=0, \,\, \text{ in }(\overline{A\cup B})^c, \qquad \gamma= g \,\, \text{ on }{\overline{A\cup B}}.
\end{equation}
 The associated optimal control (optimal feedback), such that
\begin{equation}\label{I}
 \tilde{H}(\my):=g(\tilde{Y}_\tau) +\int_0^\tau \sqrt{2\eps} \mv^*(\tilde{Y}_s) \cdot \ud_\nn B_s+  \int_0^\tau \frac1{2}  |\mv^*(\tilde{Y}_s)|^2 \ud s
\end{equation}
 being  deterministic, is given by
\begin{equation}
\mv^* = - \nabla \gamma =\frac1\beta  \nabla \ln \bE_P\bbs{e^{-\beta g(Y_\tau)}} = \frac1\beta \nabla \ln q.
\end{equation}
{
Indeed, one can verify this by applying Ito's formula to $\gamma(\tilde{Y}_{ \tau})$
\begin{equation}
\begin{aligned}
I_1:=&\gamma(\tilde{Y}_{\tau})-\gamma(\tilde{Y}_0)=1-\gamma(\vec{y})\\
 =& \int_0^{\tau} \tilde{Q} \gamma(\tilde{Y}_s)\ud s +\sqrt{2\eps} \int_0^{\tau}  \nabla\gamma\cdot \ud_\nn B_s\\
=&\int_0^{\tau} \eps \Delta \gamma(\tilde{Y}_s) +\bbs{-\nabla U +\mv} \cdot \nabla\gamma(\tilde{Y}_s)\ud s + \sqrt{2\eps} \int_0^{\tau}  \nabla\gamma\cdot\ud_\nn B_s\\
=&\int_0^{\tau} \bbs{\eps\Delta \gamma-\nabla U \cdot \nabla\gamma -|\nabla \gamma|^2 }(\tilde{Y}_s)\ud s +\sqrt{2\eps} \int_0^{\tau}  \nabla\gamma \cdot\ud_\nn B_s\\
=&-\frac{1}{2}\int_0^{\tau} |\nabla \gamma|^2 (\tilde{Y}_s)\ud s +\sqrt{2\eps} \int_0^{\tau}  \nabla\gamma\cdot\ud_\nn B_s,
\end{aligned}
\end{equation}
where $\tilde{Q}$ is the generator of $\tilde{Y}_t$ and we used $\mv^*=- \nabla \gamma$ and \eqref{HJB}. On the other hand,
\begin{equation}
\begin{aligned}
I_2:=& \int_0^{\tau} \sqrt{2\eps} \mv^*(\tilde{Y}_s) \cdot\ud_\nn B_s+  \int_0^{\tau} \frac1{2}  |\mv^*(\tilde{Y}_s)|^2 \ud s\\
=&- \sqrt{2\eps}\int_0^{\tau} \nabla \gamma \cdot\ud_\nn B_s +\frac{1}{2} \int_0^{\tau}  |\nabla \gamma|^2 \ud s
\end{aligned}
\end{equation}
due to $\mv^*=- \nabla \gamma.$
Then we have
\begin{equation}
I_1+I_2 =  \int_0^{\tau} \bbs{     \eps \Delta \gamma -\frac12 |\nabla \gamma|^2 - \nabla U \cdot  \nabla\gamma }\ud s=0,
\end{equation}
which, together with $I_1=1-\gamma(\vec{y})$, shows $\tilde{H}$ is deterministic.
}
Since the optimality ensures $\tilde{H}$ in \eqref{I} is deterministic, so
$$\bE_P\bbs{e^{-\beta \tilde{H}}}=e^{-\bE_P(\beta \tilde{H})}=e^{\bE_P(\beta I_1)}=e^{\beta(1-\gamma)}=q e^\beta<+\8$$
 and thus $\mv^*\in \mathcal{A}$.

 Finally, plugging the optimal control $\mv^*= 2\eps \nabla \ln q$ into \eqref{oc_gamma} shows the effective potential for the controlled Markov process $\tilde{Y}_t$ is
 \begin{equation}\label{effectiveU}
 U^e=U-2\eps\ln q.
 \end{equation}
 Then by Ito's formula, the master equation for the controlled Markov process $\tilde{Y}_t$ is
 \begin{equation}\label{master_oc}
  \pt_t \rho = \eps \nabla \cdot  \left( e^{-\frac{U^e}{\eps}} \nabla (\rho e^{\frac{U^e}{\eps}}) \right) = \eps \nabla \bbs{\pi^e \nabla \frac{\rho}{\pi^e}}.
 \end{equation}
 {\blue Here $\pi^e$ is the effective equilibrium defined in \eqref{def:pie}.}
\end{proof}
{\blue
\begin{rem}
Although we focus on a reversible process defined by SDE \eqref{sde-y}, we remark  Theorem \ref{th_soc} still holds for irreversible process with a general drift $\vec{b}$\footnote{Under assumptions for existence of a solution, for instance, $\vec{b}$ is smooth enough and $\vec{b}(\vec{y})\cdot \vec{y}\leq c(1+|\vec{y}|^2)$. } and the generator $\tilde{Q}f=\eps \Delta f + \vec{b}\cdot \nabla f$. The  optimal control will be given by $2\eps \nabla \ln q$ with $q$ being the solution to $\tilde{Q}q=0$ and \eqref{dbc}.
However, the Fokker-Planck equation for irreversible process does not have a relative entropy formulation. We refer to \cite{GL21} for a structure preserving numerical scheme for  irreversible processes with a general drift field, which can be leveraged to construct an optimally controlled random walk on point clouds for general irreversible processes.
\end{rem}
\begin{rem}
Notice  the value function $\gamma(\vec{y})=-2\eps \ln q(\vec{y})$ satisfies
\begin{equation}
 0 = \eps\Delta \gamma -\frac{1}{2}|\nabla \gamma|^2 -\nabla U \cdot \nabla\gamma= -e^{\frac{\gamma}{2\eps}} Q(e^{-\frac{\gamma}{2\eps}} ).
\end{equation}
 We comment on the connection with the Logarithmic transformation framework developed by  Sheu-Fleming \cite{Sheu85} \cite[Section VI]{fleming06}.   Define a Hamiltonian operator $Hf := e^f Q(e^{-f})$, then by \cite[Lemma 9.1, p 257]{fleming06}
\begin{equation}
H f =\min_h \{-Q^h f  -  Q^h(\ln h) + \frac{Q h}{h}\}.
\end{equation}
This Hamiltonian operator and the associated HJB for the optimal control for exit problem has been studied by \cite{Hartmann2016}.  Particularly, the authors applied the optimal control verification theorem in \cite[Section VI]{fleming06} to the exit problem in the infinite time horizon and constructed an optimally controlled Markov chain based on the solution to the associated HJB.
\end{rem}
}
\begin{rem}\label{rem_sam}
We remark Theorem \ref{th_soc} uses a terminal cost function $g$ and the associated committor function $q$ to construct an optimal feedback $\mv^*$ and thus the associated controlled process $\tilde{Y}$. More importantly, the transition from absorbing set $A$ to another absorbing set $B$ is a rare event for the original Markov process $Y_t$ while this transition becomes an almost sure event for the controlled Markov process $\tilde{Y}_t$. This has significant statistic advantage because using the controlled process $\tilde{Y}_t$, which realizes the conformational transitions almost surely. The computations for statistic quantities in the original rare event becomes more efficient; see the Algorithm \ref{alg:RW} for the controlled random walk on point clouds.
\end{rem}

\section{Markov process and transition path theory on point clouds}\label{sec4}
This section focuses on constructing an approximated Markov process on point clouds and computing the  discrete analogies in the transition path theory on point clouds. In Section \ref{sec_upwind}, we will first introduce a finite volume scheme which approximates the original Langevin dynamics \eqref{sde-y} and its master equation on $\nn$.  In Section \ref{sec_soc_D}, based on the approximated Markov process, we design the discrete counterparts for the committor functions, the discrete Doob $h$-transform, the generator for the optimal controlled Markov process in the transition path theory on point clouds.
\subsection{Finite volume scheme and the approximated Markov process on point clouds}\label{sec_upwind}
In this section, we first propose a finite volume scheme for the original Fokker-Planck equation based on a  data-driven approximated Voronoi tesselation  for $\nn$. Then we reformulate it as a Markov  process on point clouds.
\subsubsection{Voronoi tesselation and finite volume scheme}
Suppose $(\nn, d_\nn)$ is a $d$ dimensional smooth closed submanifold of $\mathbb{R}^\ell$ and $d_{\nn}$ is  induced by the Euclidean metric in $\bR^\ell$. $D:=\{\my_i\}_{i=1:n} $ are point clouds sampled from some density on $\nn$ bounded below and above. It is proved that the data points $D$ are well-distributed on $\nn$  whenever the points are sampled from a density function with lower and upper bounds \cite{Dejan15, liu2019rate}. Define the Voronoi cell as
\begin{equation}
C_i:= \{\my\in \nn ; \ud_\nn(\my,\my_i)\leq \ud_\nn(\my,\my_j) \text{ for all }\my_j\in D\} \quad  \text{ with volume } |C_i|=\hs^d(C_i).
\end{equation}
Then $\nn=\cup_{i=1}^n  C_i$ is a Voronoi tessellation of  $\nn$. Denote the Voronoi face for cell $C_i$ as
\begin{equation}
\Gamma_{ij}:= C_i\cap C_j   \text{ with its area  } |\Gamma_{ij}|=\hs^{d-1}(\Gamma_{ij}),
\end{equation}
for any $j=1, \cdots, n$. If $\Gamma_{ij}= \emptyset$ or $i =  j$ then we set $|\Gamma_{ij}|=0$.
Define the associated adjacent sample points as
\begin{equation}
VF(i):=\{j; ~\Gamma_{ij}\neq \emptyset\}.
\end{equation}

By Ito's formula,  SDE \eqref{sde-y} gives the following Fokker-Planck equation, which is the master equation for the density $\rho(\my)$ in terms of $\my$,
\begin{equation}\label{FP-N}
\pt_t \rho = \nabla \cdot  (\eps \nabla \rho + \rho \nabla U)=: FP^{\nn} \rho.
\end{equation}
Denote the equilibrium  $\pi:= e^{-\frac{U}{\eps}}$.  The Fokker-Planck operator has the following equivalent form
\begin{equation}\label{FPequi}
\begin{aligned}
FP^\nn(\rho) =& \eps \Delta\rho + \nabla \cdot (\rho \nabla U) = \nabla \cdot \bbs{\rho (\eps \nabla \ln \rho + \nabla U)}
\\
=& \eps \nabla \cdot \bbs{\rho \nabla \ln \frac{\rho}{\pi}}= \eps \nabla \cdot \bbs{\pi \nabla \frac{\rho}{\pi}}.
\end{aligned}
\end{equation}

Using \eqref{FPequi}, we have the following finite volume  scheme
\begin{equation}\label{upwind}
\frac{\ud}{\ud t}\rho_i { |C_i|}
=   \sum_{j\in VF(i)}
\frac{\pi_i+ \pi_j}{2|y_i-y_j|}
{ |\Gamma_{ij}| }
\left( \frac{\rho_j}{\pi_j}- \frac{\rho_i}{\pi_i} \right), \quad i=1, \cdots,n,
\end{equation}
where $\pi_i$ is the approximated equilibrium density  at $\my_i$ with the volume element $|C_i|$.
One can also recast \eqref{upwind} as a backward equation formulation
\begin{equation}\label{Bupwind}
\frac{\ud}{\ud t}\frac{\rho_i}{\pi_i}
=  \sum_{j\in VF(i)}
\frac{\pi_i+ \pi_j}{2\pi_i |C_i||y_i-y_j|}
{ |\Gamma_{ij}| }
\left( \frac{\rho_j}{\pi_j}- \frac{\rho_i}{\pi_i} \right) , \quad i=1, \cdots,n.
\end{equation}
{\blue  Define a
 stochastic matrix $Q$   as
\begin{equation}\label{asQ}
Q_{ij} =
\frac{\pi_i+ \pi_j}{2\pi_i |C_i||y_i-y_j|}
{ |\Gamma_{ij}| }\geq 0, \quad j\neq i,
\qquad Q_{ii}= - \sum_{j\neq i} Q_{ij}.
\end{equation}
Notice that  row sums zero, $\sum_{j} Q_{ij} = 0$. Then $Q$ is the generator of the associated Markov process.
We rewire \eqref{Bupwind} in the matrix form
\begin{equation}
\frac{\ud}{\ud t}\frac{\rho}{\pi} = Q  \frac{\rho}{\pi} \qquad \text{ with } \frac{\rho}{\pi}=\bbs{\frac{\rho_i}{\pi_i}}_{i=1:n}.
\end{equation}
}
With  an adjoint $Q$-matrix,  \eqref{upwind} can also be recast in a matrix form
\begin{equation}\label{tm4.9}
\frac{\ud}{\ud t} \rho|C| = Q^* (\rho|C|) \qquad \text{ with } \rho|C|=\{ \rho_i |C_i|\}_{i=1:n}.
\end{equation}

In practice, since we don't have the exact manifold information, the volume of the Voronoi cells $C_k$ and the area of the Voronoi faces $\Gamma_{kl}$ need to be approximated.  We refer to \cite[Algorithm 1]{GLW20} for the algorithm for approximating $|C_k|$ and $|\Gamma_{ij}|$ and the convergence analysis of this solver \eqref{upwind} for the Fokker-Planck equation \eqref{FP-N}.  We denote the approximated volumes as $|\tilde{C}_k|$ and the approximated areas as $|\tilde{\Gamma}_{kl}|$.
After replacing $C_k$ by the approximated volumes $|\tilde{C}_k|$ and replacing $\Gamma_{kl}$ by the approximated areas $|\tilde{\Gamma}_{kl}|$, \eqref{upwind}/ \eqref{mp1} becomes an approximated Markov process on point clouds, which is an implementable solver for the Fokker-Planck equation on $\nn$.
 We drop tildes without confusion in the following contexts.
\subsubsection{Markov process on point clouds} With the approximated volumes  $|C_i|$ and the approximated areas  $|\Gamma_{ij}|$,
one can interpret the finite volume scheme \eqref{upwind} as
 the forward equation for a Markov process with
 transition probability $P_{ji}$ (from $j$ to $i$) and jump rate $\lambda_j$
\begin{equation}\label{mp1}
\frac{\ud}{\ud t}\rho_i |C_i| = \sum_{j\in VF(i)} \lambda_j P_{ji} \rho_j |C_j| - \lambda_i \rho_i |C_i|,\quad i=1, 2, \cdots, n,
\end{equation}
where
\begin{equation}\label{def59}
\begin{aligned}
\lambda_i := \sum_{j\neq i} Q_{ij}= \frac1{2|C_i|\pi_i}\sum_{j\in VF(i)} \frac{\pi_i+ \pi_j}{|y_i-y_j|}|\Gamma_{ij}|, \quad i=1, 2, \cdots, n; \\
 P_{ij}:=\frac{Q_{ij}}{\lambda_i}=\frac{1}{\lambda_i}\frac{\pi_i+ \pi_j}{2\pi_i |C_i|}\frac{|\Gamma_{ij}|}{|y_i-y_j|}, \quad j\in VF(i); \quad P_{ij}=0, \quad j\notin VF(i).
 \end{aligned}
\end{equation}
Assume $\pi_i>0$ for all $i$, then we have $\lambda_i>0$ for all $i$.
One can see
it
satisfies $\sum_i P_{ji}=1$ and the detailed balance property
\begin{equation}\label{db}
 Q_{ji}\pi_j |C_j|=P_{ji} \lambda_j\pi_j |C_j| =  P_{ij} \lambda_i\pi_i |C_i| = Q_{ij} \pi_i |C_i| .
\end{equation}

\subsection{Committor function, currents, and  controlled Markov process on point clouds}\label{sec_soc_D}
In this section, we first review the corresponding concepts for the transition path theory on point clouds. Then from the optimal control viewpoint, we construct a finite volume scheme for the controlled Fokker-Planck equation and  the  associated controlled Markov process (random walk on point clouds).
\subsubsection{Committor function, currents, and transition rate}
Suppose the local minimums $a, b$ of $U$ are two cell center  with index $i_a$ and $i_b$. Below, we clarify the discrete counterparts of Section \ref{sec.rTPT} for committor functions $q$,  the density of transition paths $\rho^R$, the current of transition paths $J_R$ and the transition rates $k_{AB}$.

 First, from the backward equation formulation \eqref{Bupwind}, the forward committor function $q_i, \, i=1, \cdots,n$ from $a$ to $b$ satisfies
\begin{equation}\label{Dcom}
\begin{aligned}
\sum_{j\in VF(i)}
Q_{ij}
\left( q_j- q_i \right)&=0, \quad  i\neq i_a, i_b,\\
&q_{i_a}=0, \quad q_{i_b}=1.
\end{aligned}
\end{equation}

Second, the discrete density of the reactive path \cite[Remark 2.10]{MMS2009} is defined as
\begin{equation}\label{dis_equi}
\rho_i^R := \pi_i q_i (1-q_i).
\end{equation}

Third,
with the constructed $Q$-matrix in \eqref{asQ}, the current from site $i$ to site $j$ of the reactive path from state $a$ to state $b$  is given by \cite[Remark 2.17]{MMS2009}
\begin{equation}\label{current}
J_{ij}^R := Q_{ij}\pi_i (q_j-q_i)=\piw \frac{q_j-q_i}{|y_i-y_j|},
\end{equation}
which is the counterpart of the current in \eqref{cc}. Due to \eqref{Dcom}, it is easy to check the current is divergence free, i.e., satisfying the Kirchhoff's current law,
\begin{equation}\label{div}
\sum_{j\in VF(i)} J_{ij}^R=0, \quad i\neq i_a, i_b.
\end{equation}

Finally, the transition rate from absorbing set $A$ to  $B$ can be calculated from the current. It is shown in \cite[Theorem 2.15]{MMS2009} that the transition rate from $A$ to $B$ is given by
\begin{equation}\label{eq:kab}
  k_{AB} = \sum_{i\in  A}\sum_{j\in VF(i)}J^R_{ij}.
\end{equation}
Particularly, if there is only one point $\my_{i_a}\in A$, then
\begin{equation}
k_{AB} = \la Q q , \delta_{i_a}  \ra_\pi = \sum_{j\in VF(i_a)} J_{i_a, j}^R,
\end{equation}
where $\delta_{i_a}$ is the Kronecker delta with value $1$ if $i=i_a$ while $0$ otherwise.

\subsubsection{Finite volume scheme and $Q^q$-matrix for controlled Markov process}\label{sec422}

Similar to the controlled Markov process in \eqref{master_oc}, we give the controlled Markov process on point clouds below. Suppose the local minimums $a, b$ of $U$ are two cell center  with index $i_a$, $i_b$ and  for simplicity we assume there is only one point $\my_{i_a} \in \bar{A}$. We construct a controlled random walk on point clouds $\{\my_i\}_{i=1:n, i\neq i_a}$. The controlled random walk for the general case that more than one point belong to $A$ is similar.
{\blue Below, we  derive the master equation for this controlled random walk and still denote the density at states $\{\my_i\}_{i=1:n, i\neq i_a}$ as $\rho_i.$}

First, with the effective potential
$U^e$ in \eqref{effectiveU}, the effective equilibrium is
$
\pi^e= e^{-\frac{U^e}{\eps}}= q^2 \pi.
$
{\blue We now construct a Markov process with a $Q^q$-matrix on states $\{\my_i\}_{i=1:n, i\neq i_a}$  such that
\begin{enumerate}[(i)]
\item $\pi^e= q^2 \pi$ is an equilibrium;
\item it satisfies the detailed balance $|C_i|\pi^e_i Q_{ij}^q = |C_j|\pi_j^e Q^q_{ji};$
\item it satisfies mass conservation $\frac{\ud}{\ud t}\sum_i\rho_i { |C_i|}=0.$
\end{enumerate}
}
Plug $\pi^e_i := q_i^2 \pi_i $ into the scheme \eqref{upwind}.
We propose a  finite volume scheme for the controlled Fokker-Planck equation \eqref{master_oc}
\begin{equation}\label{upwind_oc}
\begin{aligned}
\frac{\ud}{\ud t}\rho_i { |C_i|}
= \sum_{j\in VF(i), j\neq i_a}
\frac{q_i q_j (\pi_i+ \pi_j)}{2|y_i-y_j|}
{ |\Gamma_{ij}| }
\left( \frac{\rho_j}{q_j^2\pi_j}- \frac{\rho_i}{q_i^2 \pi_i} \right),
\quad i=1, \cdots,n,\,\, i\neq i_a.
\end{aligned}
\end{equation}

{\blue
With $Q$  in \eqref{asQ}, we define  for $i=1, \cdots, n, i\neq i_a,$
\begin{equation}\label{Qq}
Q^q_{ij}:=\frac{q_j}{q_i} Q_{ij}\geq 0 \quad \  (i\neq j),\ \text{and} \ Q^q_{ii}= -\sum_{j\neq i, i_a} Q^q_{ij}.
\end{equation}
$Q^q$ is an $n-1$ by $n-1$  stochastic matrix which has zero row sum, i.e. $\sum_{j\neq i_a} Q^q_{ij}=0$,  $i\neq i_a$.
One can  recast \eqref{upwind_oc} as a backward equation
\begin{equation}\label{upwind_oc_b}
\begin{aligned}
\frac{\ud}{\ud t}\frac{\rho_i} { \pi_i^e}
=&  \sum_{j\in VF(i), j\neq i_a}\frac{q_j}{q_i}
\frac{ (\pi_i+ \pi_j)}{2\pi_i|C_i| |y_i-y_j|}
{ |\Gamma_{ij}| }
\left( \frac{\rho_j}{\pi_j^e}- \frac{\rho_i}{ \pi^e_i} \right)\\
=& \sum_{j\in VF(i), j\neq i_a} Q^q_{ij} \left( \frac{\rho_j}{\pi_j^e}- \frac{\rho_i}{ \pi^e_i} \right),
\quad i=1, \cdots,n,\, i\neq i_a
\end{aligned}
\end{equation}
and $Q^q$ is the effective generator for the controlled Markov process on point clouds $\{\my_i\}_{i=1:n, i\neq i_a}$.

For (i), by plugging $\pi^e=q^2 \pi$ into \eqref{upwind_oc}, one obtains $\pi^e$ is an equilibrium solution.

For (ii), we can verify for $i,j \neq i_a$ and $i\neq j$
\begin{equation}\label{dbq}
|C_i|\pi^e_i Q^q_{ij} = q_i q_j \pi_i Q_{ij}  |C_i|= q_i q_j \pi_j Q_{ji} |C_j|= |C_j|\pi^e_j Q^q_{ji},
\end{equation}
where we used the detailed balance property of $Q$.

For (iii), recast \eqref{upwind_oc} as matrix form
\begin{equation}
\begin{aligned}
\frac{\ud}{\ud t} \rho_i |C_i| =& \sum_{j\in VF(i), j\neq i_a} Q^q_{ij} \pi_i^e|C_i|\left( \frac{\rho_j}{\pi_j^e}- \frac{\rho_i}{ \pi^e_i} \right)\\
=& \sum_{j\in VF(i), j\neq i_a} \bbs{Q^q_{ji} |C_j|\rho_j- Q^q_{ij} |C_i| \rho_i }.
\end{aligned}
\end{equation}
The summation w.r.t. $i$ for both sides concludes
mass conservation $\frac{\ud}{\ud t}\sum_{i\neq i_a}\rho_i { |C_i|}=0.$

Second,
 we plug $\rho^R_i$ defined in \eqref{dis_equi}  into  \eqref{upwind_oc} and  by using \eqref{Dcom}, we have
\begin{equation}\label{upwind_equi}
\begin{aligned}
 \sum_{j\in VF(i), j\neq i_a}
\frac{q_i q_j ( \pi_i+ \pi_j)}{2|y_i-y_j|}
{ |\Gamma_{ij}| }
\left( \frac{1}{q_j}- \frac{1}{q_i} \right)=-  \sum_{j\in VF(i), j\neq i_a}
\frac{\pi_i+ \pi_j}{2|y_i-y_j|}
{ |\Gamma_{ij}| }
\left( q_i-q_j\right)=0,\\
 \quad i=1, \cdots,n,\,\, {\blue i\neq i_a, i_b.}
\end{aligned}
\end{equation}
{\blue However, we emphasize that $\rho^R$ is not an equilibrium  for the proposed Markov process \eqref{upwind_oc} because
$Q_{i_b i_b}^q \neq Q_{i_b i_b}.$

Third, let us discuss the spectral gap of $Q^q$-matrix:
\begin{enumerate}[(1)]
\item From zero row sum property, we know $0$ is an eigenvalue of $Q^q$.
\item From the detailed balance property \eqref{dbq},  we know the dissipation estimate
\begin{equation}
\la Q^q u, u\ra_{\pi^e|C|} = -\frac{1}{2} \sum_{i,j; i\neq j} Q^q_{ij}(u_j-u_i)^2 \pi^e_i |C_i|\leq 0.
\end{equation}
Thus  the eigenvalues of $Q^q$ satisfy $0=\lambda_1 \geq \lambda_2\geq \cdots.$
\item Since the manifold $\nn$ is assumed to be connected, so the associated graph for $Q^q$ given by  Delaunay triangulation is connected. Hence $\la Q^q u, u\ra_{\pi^e|C|} =0$ if and only if $u\equiv \text{constant}$. Therefore  there is a spectral gap for $Q^q$, i.e., $0=\lambda_1 > \lambda_2\geq \cdots.$
\end{enumerate}

}

 }

Finally,  one can recast \eqref{upwind_oc_b} as  the controlled Markov process with the controlled transition probability $P^q_{ji}$ (from site $j$ to $i$) and the controlled jump rate $\lambda_j^q$
\begin{equation}\label{mp_oc}
\frac{\ud}{\ud t}\rho_i |C_i| = \sum_{j\in VF(i), j\neq i_a} \lambda^q_j P^q_{ji} \rho_j |C_j| - \lambda^q_i \rho_i |C_i|,\quad i=1, 2, \cdots, n,\,\, i\neq i_a,
\end{equation}
where
\begin{equation}
\begin{aligned}
\lambda^q_i =\sum_{j\neq i} Q^q_{ij}= \frac1{2|C_i|\pi_i}\sum_{j\in VF(i), j\neq i_a} \frac{q_j}{q_i}\frac{\pi_i+ \pi_j}{|y_i-y_j|}|\Gamma_{ij}|, \quad i=1, 2, \cdots, n,  \,\, i\neq i_a; \\
 P^q_{ij} = \frac{Q^q_{ij}}{\lambda^q_i} = \frac{1}{\lambda_i^q}\frac{q_j}{q_i}\frac{\pi_i+ \pi_j}{2\pi_i |C_i|}\frac{|\Gamma_{ij}|}{|y_i-y_j|}, \quad j\in VF(i), j\neq i_a; \quad P^q_{ij}=0, \quad j\notin VF(i).
\end{aligned}
\end{equation}
We remark that the controlled generator $Q^q$ \eqref{Qq} constructed using the Doob $h$-transform  is also used in \cite{Todorov, Hartmann2016} for the exit problem for controlled Markov chains. 

 Under the constructed controlled transition probability $P_{ij}^q$ for the controlled random walk, the transition from the  metastable state $a\in A$ to $b\in B$  is almost sure in $O(1)$ time rather than a rare event. Taking advantage of this nature, we will provide an algorithm for finding the mean transition path from $A$ to $B$. This algorithm can be efficiently implemented by Monte Carlo simulation of the controlled random walk on point cloud. See Algorithm \ref{alg:meanpath}.

\begin{rem}
Similar to the continuous version, we formally calculate the discrete optimal control fields below.
From \eqref{upwind_oc},
\begin{equation}\label{QQQ}
\begin{aligned}
\frac{\ud}{\ud t}\rho_i { |C_i|}
&= (Q^q)^* (\rho|C|):=  \sum_{j\in VF(i)}
\frac{ (\pi_i+ \pi_j)}{2|y_i-y_j|}
{ |\Gamma_{ij}| }
\left( \frac{q_i}{q_j} \frac{\rho_j}{\pi_j}- \frac{q_j}{q_i} \frac{\rho_i}{ \pi_i} \right)\\
&= \sum_{j\in VF(i)}
\frac{ (\pi_i+ \pi_j)}{2|y_i-y_j|}
{ |\Gamma_{ij}| }
\left(  \frac{\rho_j}{\pi_j}- \frac{\rho_i}{ \pi_i} + \bbs{\frac{q_i}{q_j} -1} \frac{\rho_j}{\pi_j}  - \bbs{ \frac{q_j}{q_i}-1} \frac{\rho_i}{ \pi_i}  \right)\\
&=  \sum_{j\in VF(i)}
\frac{ (\pi_i+ \pi_j)}{2|y_i-y_j|}
{ |\Gamma_{ij}| }
\left(  \frac{\rho_j}{\pi_j}- \frac{\rho_i}{ \pi_i} \right) +  \sum_{j\in VF(i)}
\frac{ (\pi_i+ \pi_j)}{2|y_i-y_j|}
{ |\Gamma_{ij}| }
\bbs{\frac{\rho_j}{q_j\pi_j}  + \frac{\rho_i}{ q_i\pi_i} }  \bbs{q_i-q_j}  \\
&= Q^* (\rho |C|)  -   \sum_{j\in VF(i)} |\Gamma_{ij}| \frac{q_j-q_i}{|y_i-y_j|}\frac{ (\pi_i+ \pi_j)}{2}\bbs{\frac{\rho_j}{q_j\pi_j}  + \frac{\rho_i}{ q_i\pi_i} }\\
&= Q^* (\rho |C|)  -   \sum_{j\in VF(i)} |\Gamma_{ij}|v_{ij} \rho_{ij},
\end{aligned}
\end{equation}
where $v_{ij} = 2 \frac{(q_j-q_i)}{|y_j-y_i|} \frac{2}{q_i+q_j}$ and  $\rho_{ij} =\frac18 (q_i+q_j)   (\pi_i+ \pi_j)\bbs{\frac{\rho_j}{q_j\pi_j}  + \frac{\rho_i}{ q_i\pi_i} }$.
Thus, as a counterpart of  Theorem \ref{th_soc}, from the optimal control viewpoint for
the Markov process (random walk) on point clouds, we can regard
$
v_{ij} = 2 \frac{(q_j-q_i)}{|y_j-y_i|} \frac{2}{q_i+q_j}
$
as the discrete optimal feedback control field from $i$ to $j$ (along edge $e_{ij}$ of the associated Delaunay triangulation).
\end{rem}

\section{Data-driven solver and computations}\label{sec_com}
In this section,  we introduce  the algorithms for finding the transition path on point clouds.  As we will see, the transition from one metastable state to another  for the optimally controlled Markov process is no longer rare. We can efficiently simulate these transition events and compute the mean transition path based on a level set determined by the committor function $q$ and a mean path  iteration algorithm on point clouds adapted from a finite temperature string method in  \cite{FTSM}. Algorithms for the construction of the approximated Markov chain and the dominant transition path  are given in Section \ref{sec_dom}. Algorithms for the Monte Carlo simulation and the mean transition path based on the controlled Markov process are given in Section \ref{sec_MC}.

\subsection{Computation of dominant transition path}\label{sec_dom}

We first need to construct the approximated Markov chain based on the  point clouds $\{\my_i\}_{i=1:n}$, i.e., to  compute the coefficients in the discrete generator \eqref{asQ}. Particularly, the approximated cell volumes $|C_k|$ and the approximated edge areas $|\Gamma_{kl}|$ can be obtained by the approximated Voronoi tessellation in \cite[Algorithm 1]{GLW20}.  Another related  local meshed method for computing the committor function via point clouds was given in \cite{Lu18c}.

Then based on the associated Markov process \eqref{asQ} with the approximated coefficients, we can compute the dominant transition path and the transition rate between metastable states on manifold $\nn$. Below we will simply mention the basic concepts and algorithms of the transition path theory of the Markov jump process  for completeness. Further details can be referred to \cite{MMS2009}.

We seek the dominant transition path from the starting state  $A$ to the ending state $B$. All  algorithms presented are  also valid for any starting state in absorbing set $A$ and ending state in set $B$. This dominant path defined in \cite{MMS2009} is the reactive path connecting $A$ and $B$ that carries the most probability current. We construct a weighted directed graph $G(V, E)$ using dataset $V=\{\my_i\}_{i=1:n}$ as  nodes, $E=\{e_{ij},\, J^R_{ij}>0\}$ as a directed edge with weight $J^R_{ij}$. Here  $J^R_{ij}>0$ is computed via \eqref{current}. From  \eqref{current}, there is no loop in the directed graph $G(V,E)$.

Given the starting and ending states $A, B\subset\{\my_i\}_{i=1:n}$, a reactive trajectory from $A$ to $B$ is an ordered sequence $P=[\my_0, \my_1,... \my_k], \,\my_i\in V,\, (\my_i, \my_{i+1})\in E$ such that $\my_0\in A$, $\my_k\in B$ and $\my_i\in (A\cup B)^c$, $0<i<k$ for some $k\leq n$.  We denote the set of all such reactive trajectories by $\Ps$. From \eqref{current}, along any reactive trajectory $P\in \Ps$, the values of the committor function
\begin{equation}\label{increase}
0=q_0<q_1<\cdots<q_k=1
\end{equation}
is strictly increasing from $0$ to $1$.
Given a reactive trajectory $P$, the maximum current carried by this reactive trajectory $P$, called capacity of $P$, is
$$c(P):=\min_{(i,j)\in P}J^R_{ij}.$$
Among all possible trajectories from $A$ to $B$, one can further find the one with the largest capacity
\begin{equation}
c_{\max}:= \max_{P\in \Ps} c(P), \quad P_{\max} \in \argmax_{P\in \Ps} c(P).
\end{equation}
We call the associated edge
\begin{equation}\label{eq:bn}
  (b_1, b_2)=\argmin_{(i,j)\in P_{\max}} J^R_{ij}
\end{equation}
the dynamical bottleneck with the weight $c_{\max}= J_{b_1  b_2}^R$.
For simplicity, we assume $J_{ij}^R$ are distinct, so $b_1, b_2$ are uniquely determined.

Finding the bottleneck provides a divide-and-conquer algorithm for finding the most probable path recursively.
The dominant transition path is the reactive path with the largest effective probability current \cite{MMS2009, E_Vanden-Eijnden_2010}.
Computing the dominant transition path is a recursion of finding the maximum capacity
on subgraphs.

Now we use the bottleneck  $(b_1, b_2)$ and level-set of committor function $q$ to divide the original graph $G(V,E)$ into two disconnected subgraphs $G_L$ and $G_R$ as below.

Note that every path in $P_{\max}$ pass through the bottleneck  $(b_1, b_2)$. Thus the weight of each edge in $P_{\max}$ is larger than the weight of bottleneck $J_{b_1 b_2}^R$. So we  first remove all the edges of the original graph $G(V,E)$ with weight smaller than $J_{b_1 b_2}^R$.  Denote
\begin{equation}\label{q53}
V_L:= \{\my_i; \ q_i\le q_{b_1}\}, \quad  V_R:= \{\my_i; \ q_i\ge  q_{b_2}\}.
\end{equation}
Construct the new graph
\begin{equation}\label{eq:GL}
\begin{aligned}
G_L:=(V_L, E_L),\quad\text{with}\ E_L:=\{e_{ij}\in E; \,\my_i, \my_j \in V_L,\, J^R_{ij}> J_{b_1 b_2}^R\};
\\
G_R:=(V_R, E_R),\quad\text{with}\ E_R:=\{e_{ij}\in E;\, \my_i, \my_j \in V_R,\, J^R_{ij}> J_{b_1 b_2}^R\}.
\end{aligned}
\end{equation}
Then we find the dominant transition path in $G_L$ from $A$ to $b_1$ and in $G_R$ from $b_2$ to $B$. So the computation of the dominant transition path is simply finding the bottleneck recursively.


In summary, we will first compute the committor function $q$ by solving the linear system \eqref{Dcom}. Then we construct the graph $G(V,E)$, and compute the dominant transition path based on recursively finding the bottlenecks  and the dominant transition paths, see \cite{MMS2009} for further implementation details of the algorithmic constructions.

\subsection{Mean transition path and the computation on point clouds}\label{sec_MC}
 
The dominant transition path from metastable state $A$ to $B$ obtained by TPT is a transition path that carries the most probability current. Below we will introduce the concept mean transition path by taking expectation with respect to the transition path density \eqref{rhoR}, which forms the rationale of our algorithm.

For any co-dimension one surface $S$ on $\mathcal{N}\subset \mathbb{R}^\ell$, we define its projected mean
\begin{equation}\label{eq:pinS}
  \vec{p}_S := \Pc \Big(Z_S^{-1}\int_{S}x\pi(x)q(x)(1-q(x))\ud\sigma\Big),
\end{equation}
where $Z_S=\int_{S}\pi(x)q(x)(1-q(x))\ud\sigma$ is the normalization constant, and $\Pc: \mathbb{R}^\ell\rightarrow \mathcal{N}$ is a projection, e.g. the closest point projection,  which is assumed unique in our paper.
We denote the mean transition path by $\vec{p}(\alpha)\in\mathcal{N}\subset\mathbb{R}^\ell$, where $\alpha\in [0,1]$ is the normalized arc length parameter that $\abs{\vec{p}'(\alpha)}\equiv Const$.

First, notice from \eqref{increase}, the committor function $q$ strictly increases from 0 to 1 along the transition path $\vec{p}$ from $A$ to $B$. We  assume the manifold $\nn$ can be parameterized by $(q,\sigma)$. Second,
 choose $S$ in \eqref{eq:pinS} as the iso-committor surface intersecting $\vec{p}(\alpha)$:
\begin{equation}\label{eq:isoq}
S_\alpha=\{x\in\mathcal{N}|q(x)=q(\vec{p}(\alpha))\}.
\end{equation}
We define $\vec{p}(\alpha)$ as the projected mean on the iso-committor surface $S_\alpha$. By the coarea formula on manifold, for any $\alpha\in[0,1]$, we can rewrite \eqref{eq:pinS} as
\begin{equation}\label{eq:pinisoq}
  \vec{p}(\alpha) = \Pc\Big(Z_{\alpha}^{-1}\int_{\mathcal{N}}x\pi(x)|\nabla q(x)|\delta(q(x)-q(\vec{p}(\alpha)))\ud x\Big),
\end{equation}
where we used $q(x)$ is constant on  $S_\alpha$ and is included in the normalization constant $Z_{\alpha}^{-1}:=Z_{S_\alpha}^{-1}q(\vec{p}(\alpha))(1-q(\vec{p}(\alpha)))$. We denote \eqref{eq:pinisoq} as a projected average $\vec{p}(\alpha) = \Pc\left< x \right>_{\pi_{q(\vec{p}(\alpha))}}$, where the average is taken with respect to the density $\pi_{q(\vec{p}(\alpha))}(x) \propto \pi(x)|\nabla q(x)|\delta(q(x)-q(\vec{p}(\alpha)))$ on $\mathcal{N}$.

Note in \eqref{eq:pinisoq}, the definition of $\vec{p}(\alpha)$ depends on $\vec{p}(\alpha)$ itself, we can compute $\vec{p}$ by a Picard iteration, i.e.,
\begin{equation}\label{eq:picard}
  \vec{p}^{l+1}(\alpha) = \Pc\Big(Z_{\alpha}^{-1}\int x\pi(x)|\nabla q|\delta(q(x)-q(\vec{p}^l(\alpha)))\ud x\Big):=\Pc\left< x \right>_{\pi_{q(\vec{p}^l(\alpha))}}.
\end{equation}
This resembles the finite temperature string method \cite{FTSM, Ren2005Transition}, which is developed to compute the average of RHS by sampling techniques. However, since the transition between metastable states rarely happens, the sampling is difficult.

With the help of the controlled dynamics dictated by effective potential $U^e$, one can compute the mean transition path $\vec{p}(\alpha)$ efficiently. Note that the optimally controlled equilibrium is only a modification of $\pi$ with a prefactor, i.e.,  $\pi^e=C q^2 \pi $, where constant $C$ ensures $\int_{\mathcal{N}}\pi^e\ud x=1$. Therefore, since $q(x)$ is constant on $S_\alpha$,  mean transition path  $\vec{p}(\alpha)$ can be identically recast as
\begin{equation}\label{eq:pinisoq2}
\begin{aligned}
  \vec{p}(\alpha)   &=\Pc\Big((Z_{\alpha}Cq^2(\vec{p}(\alpha)))^{-1}\int_{\mathcal{N}}x\pi^e(x)|\nabla q|\delta(q(x)-q(\vec{p}(\alpha)))\ud x \Big)=: \Pc \left< x \right>_{\pi^e_{q(\vec{p}(\alpha))}}.
\end{aligned}
\end{equation}
The density $\pi^e_{q(\vec{p}(\alpha))}\propto \pi^e(x)|\nabla q|\delta(q(x)-q(\vec{p}(\alpha)))$. The mean transition path can be computed by Picard iteration
\begin{equation}\label{eq:picardq}
  \vec{p}^{l+1}(\alpha) = \Pc \left< x \right>_{\pi^e_{q(\vec{p}^l(\alpha))}}.
\end{equation}
Under the dynamics governed by $U^e$, the exit from the attraction basin of metastable state $A$ is almost sure in $O(1)$ time, thus the sampling of the transition is much easier.

On the numerical aspect, we can also compute $\vec{p}$ on point clouds.  Given a point cloud $D=\{\my_i\}_{i=1:n}$, we simulate a random walk $\{\my^q_t\}$ on $D$ based on the controlled generator $Q^q$ in \eqref{Qq}. In details, we first extend the Markov process with $(Q^q)_{ij}$ in \eqref{Qq} to include the site $i_a$. Then we have $\lambda^q_{i_a} = +\8$,  so the waiting time at $i_a$ is zero. Thus at $t=0$, we start the simulation  at  $\my^q_0\in  VF(i_a)$ with probability
\begin{equation}\label{exitA}
\begin{aligned}
  P^q_{i_a j} = \frac{q_j(\pi_{i_a} + \pi_{j})}{\mathcal{Z}}\frac{|\Gamma_{j i_a}|}{|y_{i_a} - y_j|} ,\quad  j\in VF(i_a),
  \qquad  \mathcal{Z} =\sum_{j\in VF(i_a)} q_j\frac{\pi_{i_a}+ \pi_j}{|y_{i_a}-y_j|}|\Gamma_{i_a j}|.
\end{aligned}
\end{equation}
In  other words, $P^q_{i_a j} \propto J_{i_a j}^R$. We  refer to \cite[Lemma 1.3]{Lu_Nolen_2015} for the reactive exit distribution  on $\pt A$ for the continuous Markov process.
Suppose $\my^q_{t_k}=\my_i$, the next step is to update $\Delta t_k$ and $\my^q_{t_{k+1}}$ as follows.  (i) The waiting time $\Delta t_k = t_{k+1}-t_k \sim \mathcal{E}(\lambda^q_i)$ is an exponentially distributed random variable with rate $\lambda^q_i$; (ii) $\my^q_{t_{k}}$ jumps to $\my_j\in VF(\my_i)$ with probability $P_{ij}^q\equiv Q_{ij}^q/\lambda^q_i$, where $\lambda^q_i$ is defined in \eqref{def59}.
We repeat this simulation $K$ times to obtain the data $\{\my^q_{k}, \Delta t_k\}_{k=0:K}$,  in which we restart the simulation from $A$ each time when we hit $B$. Denote a sampled trajectory part $P_r$ of length $r$, from $\pt A$ to $B$, as $P_r:=\{(\my^q_0, \Delta t_0), (\my^q_1, \Delta t_1), \cdots, (\my^q_r, \Delta t_{r})\}$ such that $\my^q_0 \in VF(i_a)$ and $\my^q_r\in B$. We summarize this simulation in Algorithm \ref{alg:contr}.

\begin{algorithm}[t] \label{alg:contr}
\SetAlgoLined
\Parameter{Maximum iteration $K_{\rm max}$.}
 Set $k=0$, $r=0$. Generate $\my^q_{0}\in VF(i_a)$ with probability $P_{i_a j}^q$.

 $\my^q_{{k+1}}:= \my_{t_{k+1}}^q=\my_s$, where $s=\min\{s|\sum_{j=1, j\neq i}^{s}P^q_{ij}\geq \eta\}$, where $\eta\sim U[0,1]$ is a uniformly distributed random variable.

 $t_{k+1}=t_k+\Delta t_k$,  $\Delta t_k$ being an exponentially distributed random variable with rate $\lambda^q_i$.

 $k\leftarrow k+1$, $r\leftarrow r+1$. Repeat until $\my^q_r\in B$. Record the trajectory $P_r=\{(\my^q_0, \Delta t_0),  \cdots, (\my^q_r, \Delta t_r)\}.$

Reset $r=0$, $\my^q_{r}\in VF(i_a)$ according to Step 1. Repeat the above iterations until  $k$ exceeds the maximum iteration number $K_{\rm max}$.
\caption{Algorithm for controlled random walk on point clouds}\label{alg:RW}
\end{algorithm}

To implement the Picard iteration \eqref{eq:picardq} using data set $\{\my^q_{k}, \Delta t_k\}_{k=0:K}$, we need to approximate the density $\pi^e_{q(\vec{p}(\alpha))}$ at first. We make an assumption that $\pi^e_{q(\vec{p}(\alpha))}$ on $S_\alpha$ is localized in $\mathcal{B}^{\mathbb{R}^\ell}_{r_0}(\vec{p}(\alpha))$,  the neighborhood of $\vec{p}(\alpha)$ with radius $r_0$ in Euclidean space $\mathbb{R}^\ell$, and $|\nabla q|$ is approximately constant in $\mathcal{B}^{\mathbb{R}^\ell}_{r_0}(\vec{p}(\alpha))$. Indeed, similar assumption was also taken in the construction of finite temperature string method. With this assumption, $\left< x \right>_{\pi^e_{q(\vec{p}(\alpha))}}\approx \left< x \right>_{\tilde{\pi}^e_{q(\vec{p}(\alpha))}}$, where the density $\tilde{\pi}^e_{q(\vec{p}(\alpha))}(x)\propto \pi(x)\chi_{\mathcal{B}^{\mathbb{R}^\ell}_{r_0}(\vec{p}(\alpha))}(x)$. Taking advantage of ergodicity, we get
$$\left< x \right>_{\pi^e_{q(\vec{p}(\alpha))}}\approx \frac{1}{T_\alpha}\sum_{k=0}^{K}\my^q_k\chi_{\mathcal{B}^{\mathbb{R}^\ell}_{r_0}(\vec{p}(\alpha))}(\my^q_k)\Delta t_k,$$
where $T_\alpha=\sum_{k=0}^{K}\Delta t_k\chi_{\mathcal{B}^{\mathbb{R}^\ell}_{r_0}(\vec{p}(\alpha))}(\my^q_k)$.

Numerically, we discretize $\vec{p}(\alpha), \alpha\in [0,1]$ into $P=\{p_m\}_{m=1:M}$ for some $M\in\mathbb{N}$. For $l$-th iteration step and for any $p_m^l$, we select segments of reactive trajectories inside the ball $\mathcal{B}^{\mathbb{R}^\ell}_{r_0}(p^l_m)$, where the radius $r_0>0$ is chosen such that  $\{\my^q_k\}\cap \mathcal{B}^{\mathbb{R}^\ell}_{r_0}(p^l_m)$ has enough samples. Denote the resulting samples as
 \begin{equation}\label{alg510}
  \{\my^q_k\}_{k=0:K}\cap \mathcal{B}^{\mathbb{R}^\ell}_{r_0}(p^l_m)=\{\my^q_{r_1},\dots \my^q_{r_s}\}, \quad r_1,r_2,\dots, r_s\in\{0,1,\dots, K\},
\end{equation}
and the Picard iteration before projection takes the form
 \begin{equation}\label{alg511}
 \tilde{p}^{l+1}_m := \frac{1}{\Delta T_l}\sum_{j=1}^{s}\my^q_{r_j}\Delta t_{r_j}, \qquad \Delta T_l = \sum_{j=1}^{s}\Delta t_{r_j}.
 \end{equation}

Furthermore,  in order to avoid the issue that all $M$ discrete points overlap and concentrate on few ones, an arc-length reparameterizing procedure similar to \cite{string} is needed.

 To do the reparameterization, we first compute
\begin{equation}\label{alg512}
  S_1=0, \quad S_m=\sum_{j=2}^{m}|\tilde{p}_{m}^{l+1}-\tilde{p}_{m-1}^{l+1}|, \quad m=2,\dots, M.
\end{equation}
 Then the total length of $\tilde{P}^{l+1}$ is approximately $S_M$. We do the arc-length reparameterizations by linear interpolation as follows. (i) Denote $L_m: = \frac{m-1}{M-1} S_M$,  $m=1,2,\dots,M$; (ii) find the index $m'$
such that $S_{m'} \leq L_m < S_{m' +1}$; (iii) calculate the linear interpolation
\begin{equation}\label{alg513}
\hat{p}_m^{l+1} \approx \frac{L_m - S_{m'}}{S_{m'+1} - S_{m'}} \tilde{p}_{m'}^{l+1} + \frac{S_{m' +1} - L_m}{S_{m'+1} - S_{m'}} \tilde{p}_{m'+1}^{l+1}.
\end{equation}
Since we don't explicitly know the manifold, the projection step is done by updating $p_m^{l+1}$ as the nearest point of $\hat{p}_m^{l+1}$ in data set $\{\my_k^q\}_{k=0:K}$.
Then we can obtain the new path $P^{l+1}=\{p_m^{l+1}\}_{m=1:M}$. This updating process can be iteratively repeated until convergence, i.e., $P^{l+1} = P^{l}$ up to some tolerance. We summarize the above algorithm for finding mean transition path in Algorithm \ref{alg:meanpath}.

 Note that the Algorithm \ref{alg:meanpath} only uses the local neighbours of each $p_m^l$ in the data set $\{\my_k^q\}$. In contrast, the algorithm to finding dominant transition path, which is revisited in Section 5.1, must consider the entire graph $G(V,E)$ with all of the nodes $\{\my_k\}$. So the proposed algorithm may be more efficient when the data set $\{\my_k^q\}$ is very large and most of the points are far away from the optimal transition path.

\begin{algorithm}[t] \label{alg:meanpath}
\SetAlgoLined
\Parameter{Simulation data $\{\my^q_k\}_{k=0:K}$, waiting time $\{\Delta t_k\}_{k=0:K}$, radius $r_0>0$.}

 Set $l=0$ and for some $M$, initialize a discrete path $P^l=\{p^l_m\}_{m=1:M}$  on manifold $\nn$ connecting $A$ and $B$, where $p^l_m\in\nn$.

 For every $1\leq m\leq M$, collect all sample points in $\mathcal{B}^{\mathbb{R}^\ell}_{r_0}(p^l_m)$ based on \eqref{alg510}.

 Update path $\tilde{P}^{l+1}$ with the projected average for $m=1,\ldots, M$ via \eqref{alg511}.

 Compute $S_1, \ldots, S_M$ via \eqref{alg512}.

 Compute $\hat{p}_m^{l+1}=p_m^{l+1}$ by arc-length reparameterization \eqref{alg513}.

 Updating $P^{l+1}=\{p_m^{l+1}\}$ by finding the nearest point of each $\hat{p}_m^{l+1}$ in $\{\my_k^q\}_{k=0:K}$

 $l \leftarrow l+1$. Repeat until $P^l$ converges or $l$ exceeds a prescribed number $L_{\max}$.
\caption{Finding mean transition path on point clouds generated by controlled random walk.}
\end{algorithm}

\section{Numerical results}\label{sec_num}
{\blue In this section, based on the   mean transition path algorithms in Section \ref{sec_MC}, we conduct three examples including two examples of Muller potential and a real world example for an alanine dipeptide   with a full atomic molecular dynamics  data.}
\subsection{Synthetic examples of Mueller potential}
We choose the Mueller potential on $\mathbb{R}^2$ as the illustrative example and map it to different manifolds. The Mueller potential on $\mathbb{R}^2$ is
\begin{equation}\label{eq:Mueller}
  U(X,Y) := \sum_{i=1}^{4}A_i\exp\left(a_i(X-\alpha_i)^2 + b_i(X-\alpha_i)(Y-\beta_i) + c_i(Y-\beta_i)^2\right).
\end{equation}
The parameters are set to be $A_{1-4}=-2,-1,-1.7,0.15$, $a_{1-4} = -1,-1,-6.5, 0.7$, $b_{1-4} = 0,0,11, 0.6$, $c_{1-4} = -10,-10,-6.5, 0.7$, $\alpha_{1-4} = 1,0,-0.5,-1$, $\beta_{1-4} = 0,0.5,1.5,1$. Denote $\vec{X}=(X,Y)$. This potential has three local minima $\vec{X}_1, \vec{X}_2, \vec{X}_3$ and two saddle points $\vec{X}_4, \vec{X}_5$. The contour plot of the Mueller potential and the stationary points are shown in Fig.~\ref{fig:Mueller2D}.

We are interested in the transitions from the metastable state $\vec{X}_1$ to $\vec{X}_3$. Finding the transition path from $\vec{X}_1$ to $\vec{X}_3$ is a well-studied problem. One can compute the transition path by some existing methods like string method \cite{string}, etc., which is  shown in Fig.~\ref{fig:Mueller2D}.
\begin{figure}[htbp]
  \centering
  \includegraphics[width=0.5\textwidth]{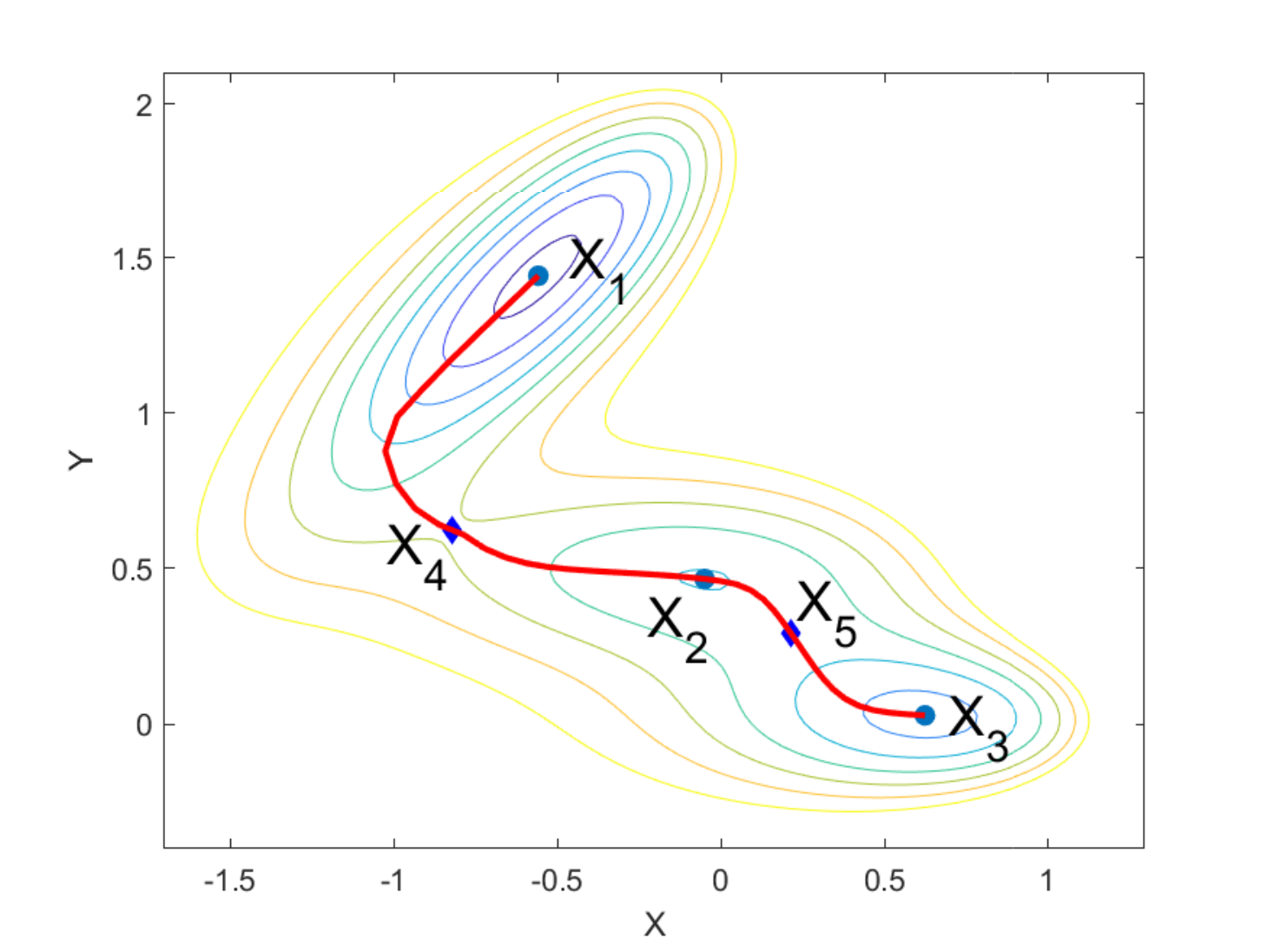}
  \caption{Contour plot of 2D Mueller potential $U(X,Y)$ in \eqref{eq:Mueller} and transition path from $\vec{X}_1$ to $\vec{X}_3$. The blue dots are local minima $\vec{X}_{1,2,3}$. The blue diamonds are saddle points $\vec{X}_{4,5}$. The red line is the transition path obtained by string method. }\label{fig:Mueller2D}
\end{figure}

\paragraph{\bf  Example 1: Mueller potential on sphere.} We map the Mueller potential to $\nn=\mathbb{S}^2$ by the stereographic projection $X=x/(1-z),\, Y=y/(1-z)$.  For any point $(x,y,z)\in \mathbb{S}^2$ except the north pole, we define $U_{\nn}(x,y,z)$ on $\mathbb{S}^2$ as
\[U_{\nn}(x,y,z) = U_{\mathbb{S}^2}(x,y,z) = U\left(\frac{x}{1-z}, \frac{y}{1-z}\right), \quad (x,y,z)\in \mathbb{S}^2\]
and consider the transitions between two metastable states under the dynamics \eqref{sde-y}. It is easy to obtain that the invariant distribution of $\my_t$ is $\pi(\my)\propto \exp(-\eps ^{-1} U_{\mathbb{S}^2}(\my))$, $\my= (x,y,z)$.
We then generate  the data set $D=\{\my_i\}_{i=1:4000}$  uniformly on $\mathbb{S}^2$ and set $\pi_i = \exp(-\eps ^{-1} U_{\mathbb{S}^2}(\my_i))$, respectively. We choose the starting state $A = D\cap \mathcal{B}^{\mathbb{R}^3}_{0.05}(\vec{X}_1)$ and the ending state $B = D \cap \mathcal{B}^{\mathbb{R}^3}_{0.05}(\vec{X}_3)$, where $\mathcal{B}^{\mathbb{R}^3}_r(\vec{x})=\{\my\in\mathbb{R}^3| |\my-\vec{x}|<r\}$ is the ball centered at $\vec{x}$ with radius $r$ in $\mathbb{R}^3$. With this data set, we can compute the committor function $q(\my)$ by solving the approximated Voronoi tesselation and the linear system \eqref{Dcom}. Since it is a diagonally dominant system, the solution is unique and we utilize a diagonal preconditioning trick to make the computation more effective and stable.

The effective potential $U^e$ with $\eps =0.1$ is shown in Fig. \ref{fig:control}. Under the controlled random walk \eqref{mp_oc}, the transition from $A$ to $B$ happens much easier.  One can see from the Monte Carlo simulation in Fig. \ref{fig:control} (c),  the exit from the attraction basin of metastable state $\vec{X}_1$ is almost sure rather than a rare event.   Compared with the original $U_{\mathbb{S}^2}(\my)$, in the effective potential, the local minima at $A$ disappears and $U^e$ tends to infinity when approaching $A$; see Fig. ~ \ref{fig:control}(b). We can also find that the dominant transition path almost goes along the gradient direction of $U^e$ from $A$ to $B$.  Taking the maximum iteration $K_{\rm max}=10^5$ in  the  Monte Carlo simulation Algorithm \ref{alg:contr},  we find 48  transition trajectories from $A$ to $B$; see Fig. \ref{fig:control}(c). In the simulation with the uncontrolled generator $Q$, there is no transition from $A$ to $B$ at all in $10^5$ steps.
The mean transition path based on Algorithm \ref{alg:meanpath} is also shown using the solid black line in Fig. \ref{fig:control}(c). We set $M=100$ and $L_{\max}=20$ in the algorithm.  The mean transition path derived by Monte Carlo simulation data (the solid black line in Fig. \ref{fig:control}(c)) highly coincides with the dominant transition path in TPT (red circles in Fig. \ref{fig:control}(c)). {\blue Providing rigorous justification for this remarkable coincidence is an important problem for future study.  Indeed, both dominant transition path algorithm and mean transition path algorithm are designed to find  ``ensembles of transition paths'' for fixed noise level $\eps>0$. Moreover, they both   rely on the level-set of committor function $q$ to order point clouds; see \eqref{q53} and \eqref{eq:isoq}.  }

\begin{figure}[t]
\includegraphics[angle = 90, width = 1\textwidth]{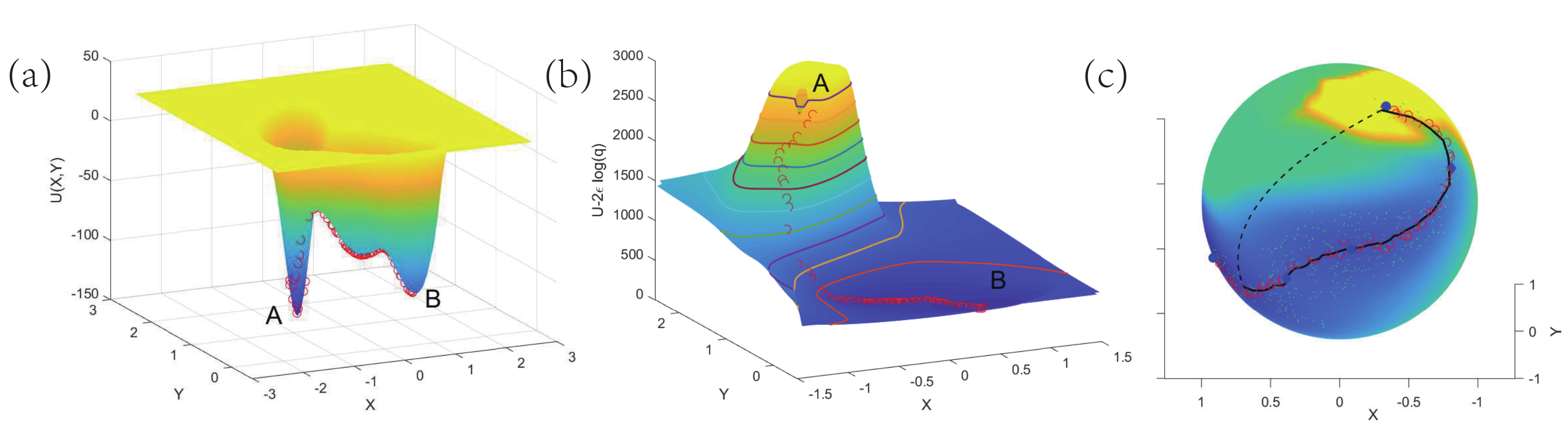}
\caption{Projections of the Mueller potential and the effective potential on $\mathbb{S}^2$ to $\mathbb{R}^2$. (a) the projection of $U_{\mathbb{S}^2}(x,y,z)$ to $\mathbb{R}^2$. (b) the projection of effective potential $U^e=U_{\mathbb{S}^2}-2\eps\log q$ to $\mathbb{R}^2$. The contour lines of $U^e$ are shown in colored lines.  The hole at $A$ is due to $U^e(A) = +\infty$. (c) The Monte Carlo simulation result of transition from $A$ to $B$ based on effective backward operator $Q^q$ and the mean transition path. The background is heat plot of $U^e$. The green dots are Monte Carlo samples. The black line is the mean transition path computed by Algorithm \ref{alg:meanpath} while the dashed line is the initial discrete path. In all sub-figures, the dominant transition paths are shown with red circles.}\label{fig:control}
\end{figure}

With committor function $q(\my)$, we can obtain the dominant transition path by applying the TPT algorithms.
We show the results for different $\eps $ in Fig.~\ref{fig:MuellerSphere}(a)-(c). As a comparison, we also compute the minimum energy path in the limit $\eps \to 0$. This can be done by minimizing the Freidlin-Wentzell action functional \cite{Freidlin_Wentzell_2012}. Namely, it is the solution of the following variational problem
\begin{equation}\label{eq:quasipotential}
  S(B;A) = \inf_{T>0}\inf_{\vec{x}\in A,\vec{y}\in B}\inf_{\psi(t)\subset \nn:\psi(0)=\vec{x}, \psi(T)=\vec{y}}\int_{0}^{T}\norm{\dot{\psi}+\nabla_{\nn}U(\psi)}^2 dt.
\end{equation}
This problem can be efficiently solved  by minimum action method (MAM) on manifold \cite{MAM, zhou2016}.  Note that $U_{\mathbb{S}^2}(x,y,z)$ can be naturally extended to $\mathbb{R}^3\backslash\{z=1\}$, one can directly apply MAM on $\nn$ by a properly designed MAM on $\mathbb{R}^3$. This zero-noise path is used as a reference; see solid red line in  Fig.~\ref{fig:MuellerSphere}.  We also map the transition path on $\nn$ to $\mathbb{R}^2$ by the stereographic projection. The result is shown in Fig.~\ref{fig:MuellerSphere}(d)-(f).

\begin{figure}[htbp]
  \centering
  \includegraphics[angle = 90, width=1\textwidth]{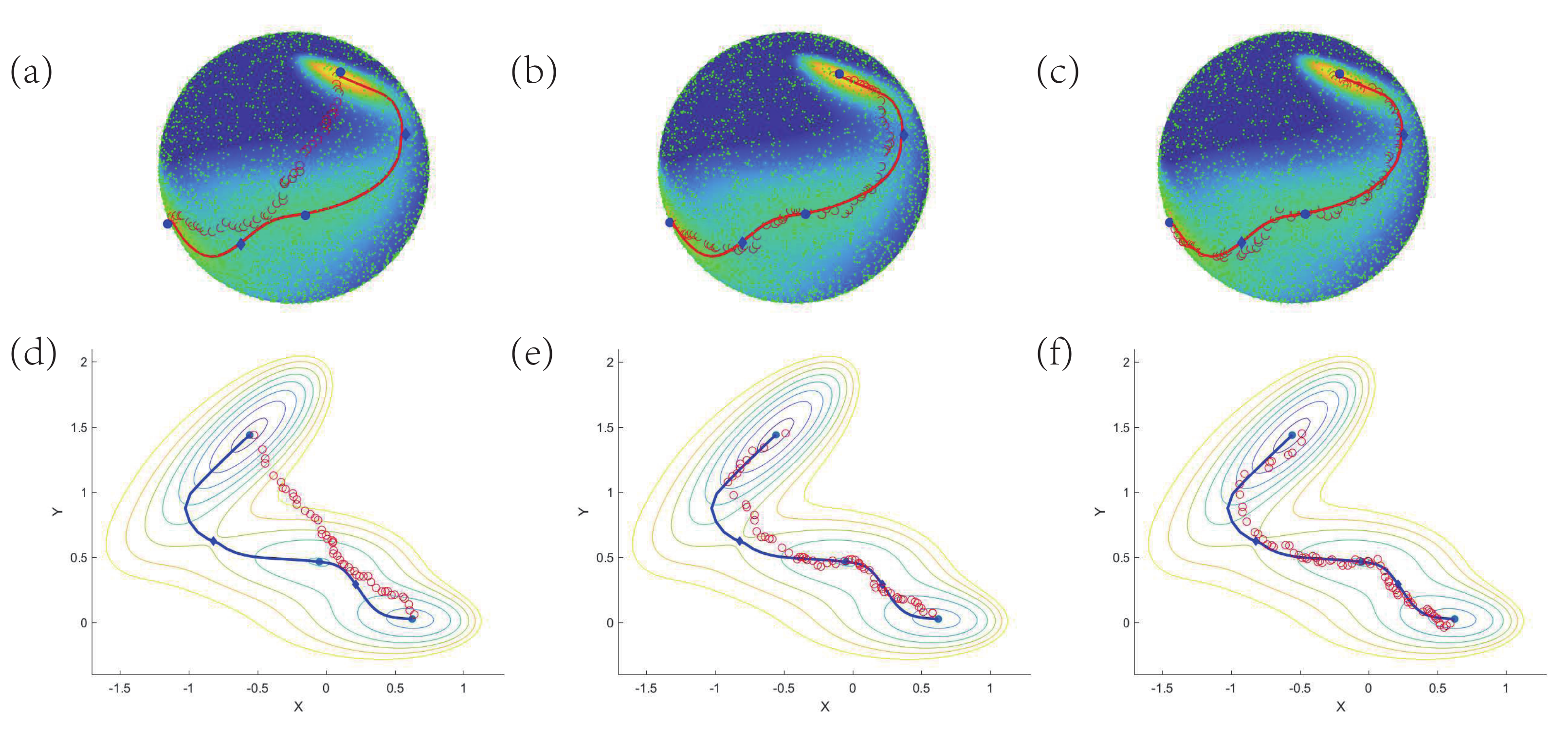}
  \caption{The dominant transition paths from $\vec{X}_1$ to $\vec{X}_3$ on $\mathbb{S}^2$ for different $\eps $. (a)(d)   $\eps =1$. (b)(e) $\eps =0.2$. (c)(f) $\eps =0.05$. In all sub-figures, the blue dots are metastable states $\vec{X}_{1,2,3}$ while the blue diamonds are saddle points. (a)-(c) The background of each sub-figure shows the heat plot of $U_{\mathbb{S}^2}(x,y,z)$. The small green dots are 4000 random samples. The paths of red circles are dominant transition paths obtained by TPT. The zero-noise minimum energy paths computed by MAM are shown with solid red  lines. (d)-(f) The background is the contour plot of potential $U(X,Y)$. The solid blue lines and red circles are the projections of transition paths in (a)-(c), respectively.
   }\label{fig:MuellerSphere}
\end{figure}

In Fig.~\ref{fig:MuellerSphere}, we find that as $\eps $ tends to zero, the dominant transition path converges to the zero-noise path obtained by MAM both on manifold $\mathbb{S}^2$ and the 2D projection. This is consistent with the Freidlin-Wentzell theory. The results are stable when different random samples are utilized.

We can find some critical transition states along the dominant transition path with the help of probability current $J^R$. Note that finding the dominant transition path is a divide-and-conquer algorithm by finding a sequence of dynamical bottlenecks. The key transition states must have the least current $J^R$. In Fig.~\ref{fig:BN}, we plot the current $J^R$ along the dominant transition path. The states with the least five $J^R$ are marked and projected to $\mathbb{R}^2$, $\eps $ is chosen to be $0.1$. One can see that all these five states are in neighbourhoods of saddle points or local minima. As stated by the Freidlin-Wentzell theory, the transition path in the zero noise limit must pass through stationary points, which is confirmed in our computations.

\begin{figure}[htbp]
  \centering
  \includegraphics[angle = 90, width=1\textwidth]{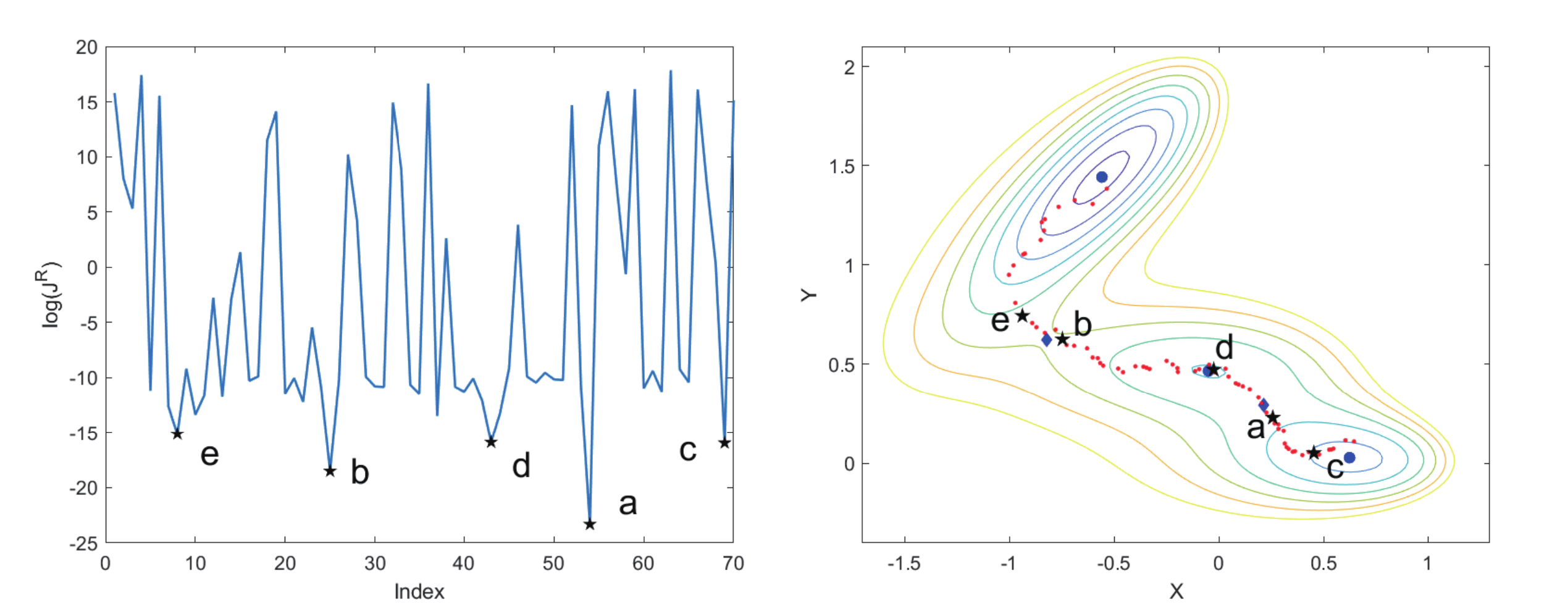}
  \caption{The key transition states along dominant transition path, $\eps =0.1$. Left panel: the $\log J^R$ along dominant transition path. The states with the least five $J^R$ are marked by a-e in ascending order. Right panel: the projection of dominant transition path on $\mathbb{R}^2$. This sub-figure is similar as (d)-(f) of Fig. \ref{fig:MuellerSphere} except that the dominant transition path is marked with red dots. The five states in the left panel are also marked in this sub-figure.
   }\label{fig:BN}
\end{figure}

The transition rate calculated by \eqref{eq:kab} is also consistent with the Freidlin-Wentzell theory. When $A$, $B$ are metastable states, it is well known that as $\eps \to 0$, $\eps \log k_{AB} \to S(B;A)$, where $S(B;A)$ in \eqref{eq:quasipotential} is the so-called quasi-potential. The value of $S(B;A)$ is a side product when computing the minimum energy path on manifold $\nn$ by MAM. The rescaled logarithm of the rates $k_{AB}$ for different $\eps $ and $S(B;A)$  are listed in Table~\ref{tab:1}. We can find  that as $\eps $ becomes smaller, these two quantities  get closer, as suggested by the theoretical result.

\begin{table}[htbp]
  \centering
  \begin{tabular}{c|c|c|c|c|c}
    \hline\hline
     & $\eps = 1$ & $\eps = 0.2$ & $\eps = 0.05$ & $\eps =0.02$ &$S(B;A)$ \\
    \hline
    $\eps \log k_{AB}$ & -0.4282 & 0.2540 & 0.3979 & 0.3999 & 0.3816 \\
    \hline\hline
  \end{tabular}
  \vspace*{0.2cm}
  \caption{Comparison of the transition rates obtained by TPT and the quasi-potential.}\label{tab:1}
\end{table}

\paragraph{\bf Example 2: Mueller potential on torus.} We can also map the Mueller potential to the torus  $\mathcal{N}=\mathbb{T}^2$ which is defined as
\[x=(R+r\cos\theta)\cos\phi,\quad y=(R+r\cos\theta)\sin\phi, \quad z=r\sin\theta, \quad \theta\in [-\pi, \pi), \phi\in [0,2\pi).\]
Set $R=2.0, r=1.0$. We define the potential $U_\nn$ on torus as
\[U_{\nn}(x,y,z) =U_{\mathbb{T}^2}(x,y,z) := U(r\theta, R\phi).\]

It is interesting to study the transition behavior of dynamics \eqref{sde-y} with finite but small noise and the driving potential are also  perturbed by noise with similar scales. In this case, minimizing the Freidlin-Wentzell action functional is not proper because the effect of finite noise is ignored. Finite temperature string method \cite{FTSM} is a good candidate for this problem. However,  it is not straightforward to apply this method on a manifold.

Instead, we can still study this problem by TPT on point clouds. We perturb the Mueller potential by small oscillations as
\[\tilde{U}(X,Y) = U(X,Y) + 0.15\sin(10\pi X)\sin(10\pi Y)\]
and define the perturbed potential $U_\nn$ on torus by
\[U_{\nn}(x,y,z) = \tilde{U}_{\mathbb{T}^2}(x,y,z) := \tilde{U}(r\theta, R\phi).\]
We choose $\eps =0.1$, which is in the same scale of our perturbations, and consider the transitions from $A=D\cap \mathcal{B}^{\mathbb{R}^3}_{0.05}(\vec{X}_1)$ to $B=D\cap \mathcal{B}^{\mathbb{R}^3}_{0.05}(\vec{X}_3)$. By using 4000 uniform random samples on $\mathbb{T}^2$, we obtain the dominant transition paths  as shown with red circles in Fig.~\ref{fig:MuellerTorusRough}. For a reference, we still plot the minimum energy path under zero noise and zero perturbation in (b).

\begin{figure}[htbp]
  \centering
  \includegraphics[angle = 90, width=1\textwidth]{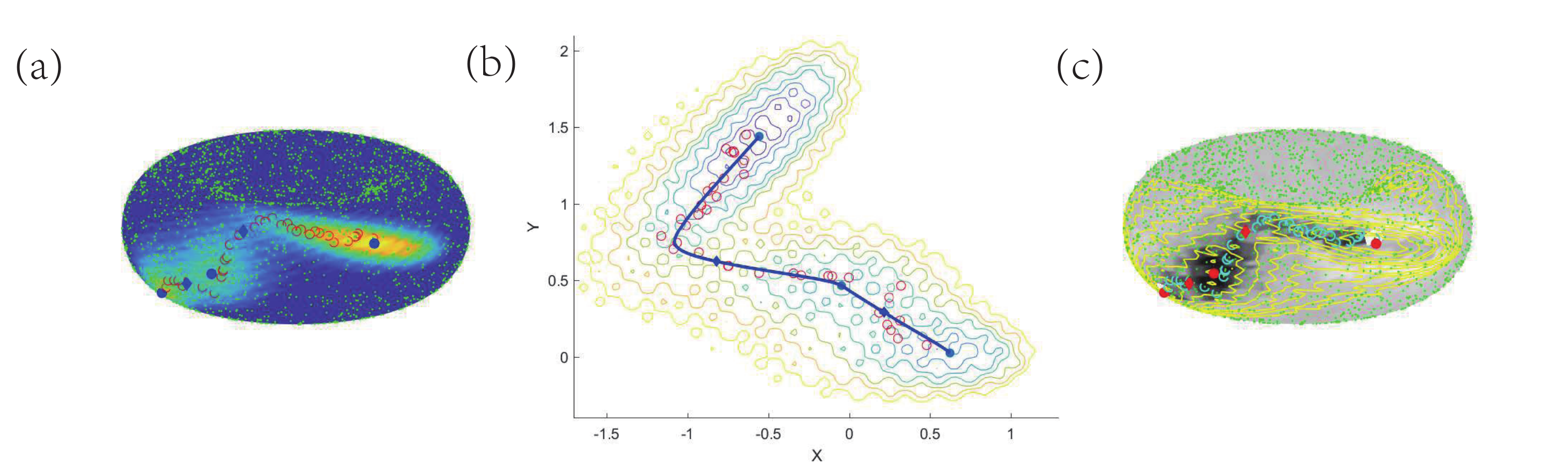}
  \caption{The dominant transition path from $\vec{X}_1$ to $\vec{X}_3$ on torus under perturbed Mueller potential ($\eps =0.1$). In three sub-figures, The metastable states and saddle points are shown with blue dots and diamonds, respectively. (a) The background shows the heat plot of $\tilde{U}_{\mathbb{T}^2}(x,y,z)$.  (c) The background shows the density of transition paths $\rho^R$ in log scale. The yellow lines correspond to the contour  of $\tilde{U}_{\mathbb{T}^2}(x,y,z)$. In both sub-figures, the small green dots are 4000 random samples, and the path of red/cyan circles is the dominant transition path. (b) The background is the contour plot of the perturbed Mueller potential $\tilde{U}(X,Y)$. The red circles and  blue solid line correspond to the projection of dominant transition path and the reference zero-noise minimum energy path, respectively.}\label{fig:MuellerTorusRough}
\end{figure}

We  can find that the noise effect on the potential is eliminated, and we still capture the main transition behavior from $\vec{X}_1$ to $\vec{X}_3$. Although the landscape is rough, the density of transition path $\rho^R$ is relatively smooth. The dominant transition path lies in the domain with largest value of $\rho^R$.

\subsection{Application on an alanine dipeptide}
We now   apply our method to a computational chemistry problem, a   manageable alanine dipeptide example with 22 atoms,  with collected data from molecular dynamics (MD) simulation.
\paragraph{\bf Example 3: Application on alanine dipeptide in vacuum.} The alanine dipeptide in vacuum is a simple and well studied molecule with  22 atoms, which implies $p=66$. It has been shown that the lower energy states of alanine dipeptide can be mainly described by two backbone dihedral angles $\phi\in [-\pi, \pi)$ and $\psi\in [-\pi, \pi)$ (as shown in Fig. \ref{fig:aa}); see \cite{Apostolakis1999Calculation}. Thus, its configuration essentially lies on a torus $\mathcal{N}$, and the dynamics  can be approximately governed by a stochastic equation like \eqref{sde-y} with $\ell=3$.  The transition between different isomers of alanine dipeptide is a good example for the study of rare events.

\begin{figure}
  \centering
  \includegraphics[width=5cm]{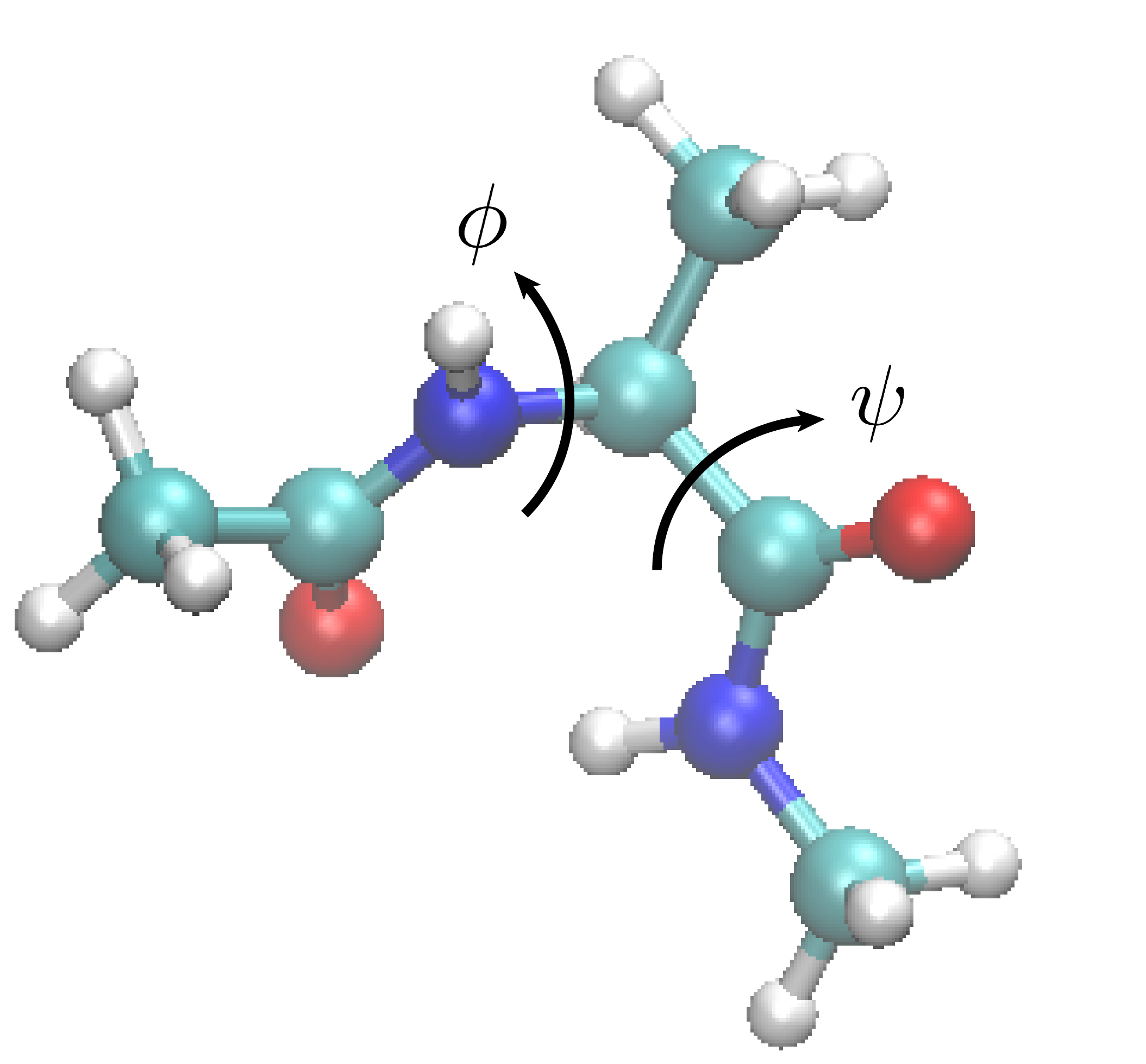}
  \caption{Schematic representation of the alanine dipeptide and two backbone dihedral angles $\phi$ and $\psi$.
  }\label{fig:aa}
\end{figure}

We apply the full atomic MD simulation of the alanine dipeptide molecule in vacuum with the AMBER99SB-ILDN force field for 100ns with room temperature $T=298K$. Then collect the data $(\phi,\psi)$ directly from the MD result. The free energy $U_{\mathcal{N}}(\phi,\psi)$ are obtained from the MD simulation by the reinforced dynamics (Fig.~\ref{fig:PE}), which approximates $U_{\mathcal{N}}$ through a deep neural network and utilizes the adaptive biasing force and reinforcement learning idea to encourage the space exploration \cite{ZWE}. There are three local minima $C_{ax}$, $C_{7eq}$ and $C'_{7eq}$, corresponding to different isomers of alanine dipeptide. We will study the dominant transition paths from  $C_{ax}$ to $C_{7eq}$, and $C_{ax}$ to $C'_{7eq}$ in the following with the developed algorithms.

\begin{figure}
  \centering
  \includegraphics[angle=90, width=0.7\textwidth]{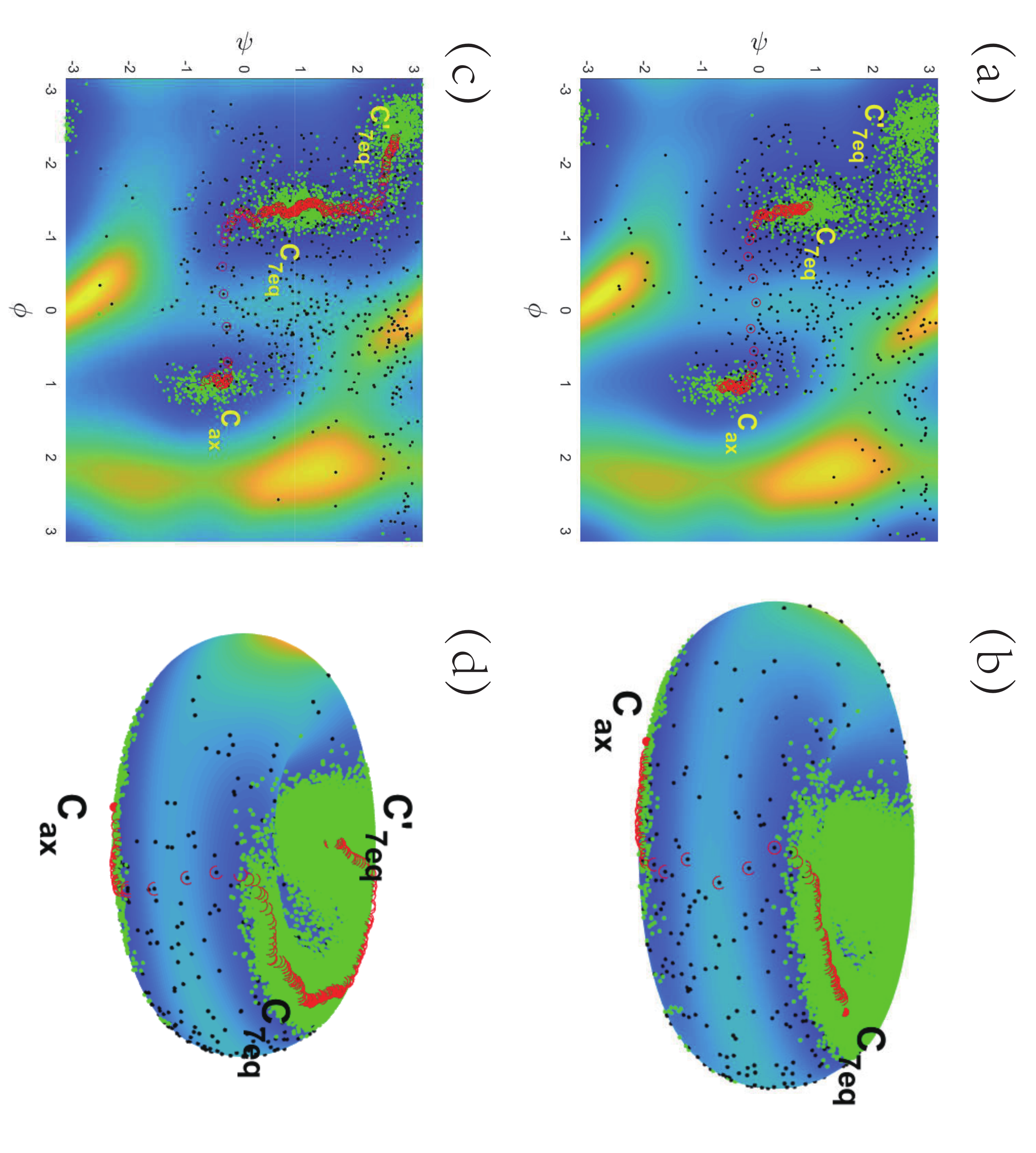}
  \caption{The sample points and transition paths from $C_{ax}$ to $C_{7eq}$ (subplot (a)-(b)) and  $C_{ax}$ to $C'_{7eq}$ (subplot (c)-(d)).  The green and black dots are MD and generated auxiliary sample points, respectively. The transition paths are marked by red circles. The background shows the effective potential $U_{\mathcal{N}}$ obtained by MD simulation. (a),(c): visualization in the $(\phi,\psi)$ plane. (b),(d): visualization on the torus.}\label{fig:PE}
\end{figure}

We collect $n=50,000$ equidistant MD time series data $D=\{(\phi_{t_i}, \psi_{t_i})\}_{i=1:n}$ with $0=t_0<t_1<\cdots <t_{n}$ ($t_{i+1}-t_i=\text{Const.}$) and map them to the torus $\mathcal{N}=\mathbb{T}^2\subset\mathbb{R}^3$ by $\sigma: (\phi, \psi)\to (x,y,z)$ defined as
\[x_{t_i}=(R+r\cos\phi_{t_i})\cos\psi_{t_i},\quad y_{t_i}=(R+r\cos\phi_{t_i})\sin\psi_{t_i}, \quad z_{t_i}=r\sin\phi_{t_i},\]
where $R=2, r=1$. These data are shown in Fig.~\ref{fig:PE} (green dots). One can find that they concentrate around the metastable states and seldom appear in the transition region ($z\approx 0$ in Fig.~\ref{fig:PE}(b),(d)). To study the transition path, we need to do data enrichment by generating some auxiliary data.

We do it by interpolation in the following way. Firstly, find the indices $I=\{1\leq j\leq n\big|z_{t_j}>0, z_{t_{j+1}}<0\}$ and collect two data sets $D^+ = \{(\phi_{t_i}, \psi_{t_i})|i=j-10,...,j, j\in I\}$ and $D^- = \{(\phi_{t_i}, \psi_{t_i})|i=j,..., j+10, j\in I\}$. Then we randomly select a pair of samples $(\phi^+_{i}, \psi^+_{i})\in D^+$ and $(\phi^-_{i}, \psi^-_{i})\in D^-$. An auxiliary sample $(\tilde{\phi}_{i}, \tilde{\psi}_{i})$ is generated by $\tilde{\phi}_{i}=\beta_1\phi^+_{i}+(1-\beta_1)\phi^-_{i}$; $\tilde{\psi}_{i}=\beta_2\psi^+_{i}+(1-\beta_2)\psi^-_{i}$, where $\beta_{1,2}\sim \mathcal{U}[0,1]$.

We sparsify the MD data points by randomly choosing 4,000 samples in $D$ and generating 500 auxiliary samples $\{(\tilde{\phi}_i, \tilde{\psi}_i)\}_{i=1:500}$. Namely, the dataset we use to compute the dominant transition path is $\tilde{D}=\{\sigma(\phi_{t_i},\psi_{t_i})\}_{i\in B}\cup\{\sigma(\tilde{\phi}_i, \tilde{\psi}_i)\}_{i=1:500}\subset\mathcal{N}$, where $B\subset \{1,2,\dots,n\}$ is a random batch with size 4000. The auxiliary samples are shown in Fig.~\ref{fig:PE} with black dots. Applying TPT theory, we get the dominant transition path from $C_{ax}$ to $C_{7eq}$, shown in    Fig.~\ref{fig:PE}(a)-(b) with red circles. Similar approach can also be applied to obtain the dominant transition path from $C_{ax}$ to $C'_{7eq}$ (Fig.~\ref{fig:PE}(c)-(d)). Both results are consistent with previous studies on this problem with other methods \cite{Apostolakis1999Calculation, Ren2005Transition}.

\begin{figure}
  \centering
  \includegraphics[angle=90, width=\textwidth]{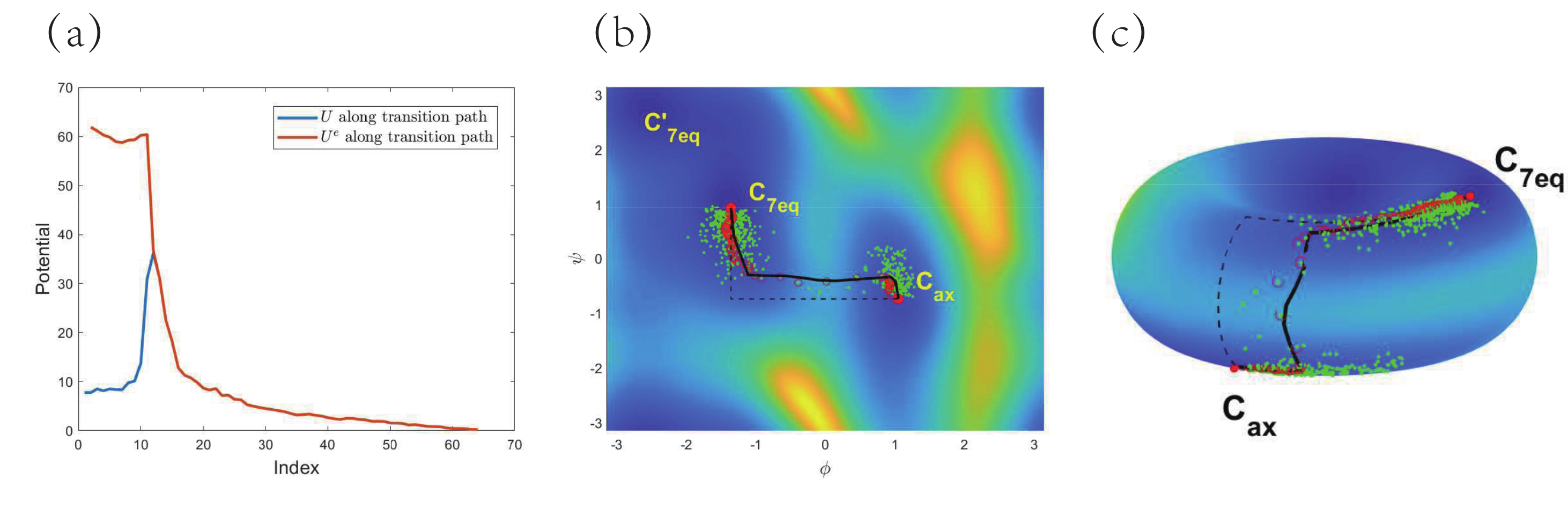}
  \caption{Numerical results for the transition from $C_{ax}$ to $C_{7eq}$ under the controlled process. (a) Comparison of  $U_{\mathcal{N}}$ and effective potential $U^e$ along the dominant transition path. (b)-(c) Simulation data of the controlled random walk \eqref{mp_oc} and the mean transition path obtained by Algorithm \ref{alg:meanpath} with different visualizations (in the $(\phi,\psi)$ plane or on the torus). The green dots are Monte Carlo samples. The dominant transition path is shown in red circles. The black solid and dashed lines correspond to the mean transition path computed by Algorithm \ref{alg:meanpath} and the initial path, respectively. }\label{fig:ctrled}
\end{figure}

With the help of controlled random walk \eqref{mp_oc}, we can simulate the transitions between the isomers more efficiently. We use the same dataset $\tilde{D}$ as in previous computation and perform the simulation for studying the transition from $C_{ax}$ to $C_{7eq}$ by Monte Carlo Algorithm \ref{alg:RW}. The transition happens 21 times in $K=10^5$ simulation steps. The potential $U_{\mathcal{N}}$ and effective potential $U^e$ along the dominant transition path in Fig.~\ref{fig:PE}(a)-(b) are shown in Fig.~\ref{fig:ctrled}(a). Similar as the case in Fig.~\ref{fig:control}, the effective potential $U^e$ achieves the local maximum at $C_{ax}$ state, and approximately decreases to $C_{7eq}$ state. In contrast, the original potential $U_{\mathcal{N}}$ has a sharp local minimum at $C_{ax}$ state, which results in the rare transition from $C_{ax}$ to $C_{7eq}$. This difference makes the transition under the potential $U^e$ is easy and frequent.

We then apply the Algorithm \ref{alg:meanpath} to get the mean transition path via the Monte Carlo samples of controlled random walk. We set $M=100$ and $L_{\rm max}=200$ in Algorithm \ref{alg:meanpath}. The numerical results are shown in Fig.~\ref{fig:ctrled}(b)-(c). One can find that the mean transition path is perfectly consistent with the dominant transition path obtained by TPT.

\section{Conclusion}
 In this paper, we first reinterpreted the transition state theory and the transition path theory as optimal control problems in an infinite time horizon. At a finite noise level $\eps>0$, based on the associated optimal control $v^*=2\eps \nabla \ln q$ and the controlled  effective equilibrium $\pi^e = q^2 \pi$, we design an optimally controlled random walk on point clouds, which realizes the original rare events almost surely  in $O(1)$ time scale. This enables an efficient sampling for the transitions between two conformational  states in a biochemical reaction system.   Taking advantage of the level set of the committor function $q$ and  the effective equilibrium $\pi^e = q^2 \pi$, a local averaging algorithm  is proposed to compute the mean transition path on manifold efficiently via the controlled Monte Carlo simulation data. Both synthetic and real world examples are conducted to show the efficiency of the  proposed algorithms, which gives consistent results with the dominant transition path algorithm.   Rigorously showing this consistency in mathematical aspect is an interesting problem in the future.

\section*{Acknowledgements}
The authors would like to thank  Prof. Weinan E for valuable suggestions, and Yuzhi Zhang for the help on the MD simulation of alanine dipeptide. J.-G. Liu was supported in part by NSF under award DMS-2106988. T. Li  was supported by the NSFC under grants Nos. 11421101 and 11825102, and Beijing Academy of Artificial Intelligence (BAAI). X. Li was supported by the construct program of the key discipline in Hunan Province.

\bibliographystyle{alpha}
\bibliography{tst}

\newcommand{\etalchar}[1]{$^{#1}$}
\begin{thebibliography}{RVEME05}

\bibitem[AFC99]{Apostolakis1999Calculation}
Joannis Apostolakis, Philippe Ferrara, and Amedeo Caflisch.
\newblock Calculation of conformational transitions and barriers in solvated
  systems: Application to the alanine dipeptide in water.
\newblock {\em J. Chem. Phys.}, 110(4):2099--2108, 1999.

\bibitem[BH16]{Hartmann2016}
Ralf Banisch and Carsten Hartmann.
\newblock A sparse markov chain approximation of lq-type stochastic control
  problems.
\newblock {\em Math. Control. Relat. Fields}, 6(3):363--389, Aug 2016.

\bibitem[BKRS15]{krylov}
Vladimir Bogachev, Nicolai Krylov, Michael R{\"o}ckner, and Stanislav
  Shaposhnikov.
\newblock {\em Fokker--Planck--Kolmogorov Equations}, volume 207 of {\em
  Mathematical Surveys and Monographs}.
\newblock American Mathematical Society, Dec 2015.

\bibitem[CL06]{coifman2006diffusion}
Ronald~R Coifman and St{\'e}phane Lafon.
\newblock Diffusion maps.
\newblock {\em Appl. Comput. Harmon. Anal.}, 21(1):5--30, 2006.

\bibitem[DS01]{deuschel2001large}
Jean-Dominique Deuschel and Daniel~W Stroock.
\newblock {\em Large deviations}.
\newblock American Mathematical Society, Rhode Island, 2001.

\bibitem[ERVE02]{string}
Weinan E, Weiqing Ren, and Eric Vanden-Eijnden.
\newblock String method for the study of rare events.
\newblock {\em Phys. Rev. B}, 66(5):052301, 2002.

\bibitem[ERVE04]{MAM}
Weinan E, Weiqing Ren, and Eric Vanden-Eijnden.
\newblock Minimum action method for the study of rare events.
\newblock {\em Comm. Pure Appl. Math.}, 57(5):637--656, 2004.

\bibitem[ERVE05]{FTSM}
Weinan E, Weiqing Ren, and Eric Vanden-Eijnden.
\newblock Finite temperature string method for the study of rare events.
\newblock {\em J. Phys. Chem. B}, 109(14):6688--6693, 2005.

\bibitem[Eva13]{Evans-SDE}
Lawrence~C Evans.
\newblock {\em An Introduction to Stochastic Differential Equations}.
\newblock American Mathematical Society, Rhode Island, 2013.

\bibitem[EVE06]{weinan2006}
Weinan E and Eric Vanden-Eijnden.
\newblock Towards a theory of transition paths.
\newblock {\em J. Stat. Phys.}, 123(3):503, 2006.

\bibitem[EVE10]{E_Vanden-Eijnden_2010}
Weinan E and Eric Vanden-Eijnden.
\newblock Transition-path theory and path-finding algorithms for the study of
  rare events.
\newblock {\em Ann. Rev. Phys. Chem.}, 61(1):391--420, Mar 2010.

\bibitem[FS06]{fleming06}
Wendell~H Fleming and Halil~Mete Soner.
\newblock {\em Controlled Markov processes and viscosity solutions}.
\newblock Springer Science \& Business Media, New York, 2nd edition, 2006.

\bibitem[FW12]{Freidlin_Wentzell_2012}
Mark~I. Freidlin and Alexander~D. Wentzell.
\newblock {\em Random Perturbations of Dynamical Systems}, volume 260 of {\em
  Grundlehren der mathematischen Wissenschaften}.
\newblock Springer Berlin Heidelberg, 2012.

\bibitem[GL21]{GL21}
Yuan Gao and Jian-Guo Liu.
\newblock Random walk approximation for irreversible drift-diffusion process on
  manifold: ergodicity, unconditional stability and convergence.
\newblock {\em preprint arXiv:2106.01344}, 2021.

\bibitem[GLW20]{GLW20}
Yuan Gao, Jian-Guo Liu, and Nan Wu.
\newblock Data-driven efficient solvers and predictions of conformational
  transitions for langevin dynamics on manifold in high dimensions.
\newblock {\em preprint arXiv:2005.12787}, 2020.

\bibitem[HBS{\etalchar{+}}13]{Hartmann_Banisch_Sarich_Badowski_S_2013}
Carsten Hartmann, Ralf Banisch, Marco Sarich, Tomasz Badowski, and Christof
  Sch{\"u}tte.
\newblock Characterization of rare events in molecular dynamics.
\newblock {\em Entropy}, 16(1):350--376, Dec 2013.

\bibitem[HRSZ17]{Hartmann_Richter_S_Zhang_2017}
Carsten Hartmann, Lorenz Richter, Christof Sch{\"u}tte, and Wei Zhang.
\newblock Variational characterization of free energy: Theory and algorithms.
\newblock {\em Entropy}, 19(11):626, Nov 2017.

\bibitem[HS12]{HS_2012}
Carsten Hartmann and Christof Sch{\"u}tte.
\newblock Efficient rare event simulation by optimal nonequilibrium forcing.
\newblock {\em J. Stat. Mech.: Theory Exp.}, 2012(11):P11004, Nov 2012.
\newblock arXiv: 1208.3232.

\bibitem[HSZ16]{Hartmann_S_Zhang_2016}
Carsten Hartmann, Christof Sch{\"u}tte, and Wei Zhang.
\newblock Model reduction algorithms for optimal control and importance
  sampling of diffusions.
\newblock {\em Nonlinearity}, 29(8):2298--2326, Aug 2016.

\bibitem[Kha12]{Khasminskii}
Rafail Khasminskii.
\newblock {\em Stochastic Stability of Differential Equations}, volume~66 of
  {\em Stochastic Modelling and Applied Probability}.
\newblock Springer Berlin Heidelberg, 2012.

\bibitem[LL18]{Lu18c}
Rongjie Lai and Jianfeng Lu.
\newblock Point cloud discretization of fokker--planck operators for committor
  functions.
\newblock {\em Multiscale Model. Simul.}, 16(2):710--726, 2018.

\bibitem[LLL19]{liu2019rate}
Anning Liu, Jian-Guo Liu, and Yulong Lu.
\newblock On the rate of convergence of empirical measure in $\8$-wasserstein
  distance for unbounded density function.
\newblock {\em Quart. Appl. Math.}, 77(4):811--829, 2019.

\bibitem[LLZ16]{zhou2016}
Tiejun Li, Xiaoguang Li, and Xiang Zhou.
\newblock Finding transition pathways on manifolds.
\newblock {\em Multiscale Model. Simul.}, 14(1):173--206, 2016.

\bibitem[LN15]{Lu_Nolen_2015}
Jianfeng Lu and James Nolen.
\newblock Reactive trajectories and the transition path process.
\newblock {\em Probab. Theory Relat. Fields}, 161(1--2):195--244, Feb 2015.

\bibitem[MSVE06]{SHV_2006}
Philipp Metzner, Christof Sch{\"u}tte, and Eric Vanden-Eijnden.
\newblock Illustration of transition path theory on a collection of simple
  examples.
\newblock {\em J. Chem. Phys.}, 125(8):084110, Aug 2006.

\bibitem[MSVE09]{MMS2009}
Philipp Metzner, Christof Sch{\"u}tte, and Eric Vanden-Eijnden.
\newblock Transition path theory for markov jump processes.
\newblock {\em Multiscale Model. Simul.}, 7(3):1192--1219, 2009.

\bibitem[RVEME05]{Ren2005Transition}
Weiqing Ren, Eric. Vanden-Eijnden, Paul Maragakis, and Weinan E.
\newblock Transition pathways in complex systems: Application of the
  finite-temperature string method to the alanine dipeptide.
\newblock {\em J. Chem. Phys.}, 123(13):052301, 2005.

\bibitem[She85]{Sheu85}
Shuenn-Jyi Sheu.
\newblock Stochastic control and exit probabilities of jump processes.
\newblock {\em SIAM J. Control. Optim.}, 23(2):306--328, Mar 1985.

\bibitem[Tod09]{Todorov}
Emanuel Todorov.
\newblock Efficient computation of optimal actions.
\newblock {\em Proc. Natl. Acad. Sci.}, 106(28):11478--11483, Jul 2009.

\bibitem[TS15]{Dejan15}
Nicol{\'a}s~Garcia Trillos and Dejan Slep{\v{c}}ev.
\newblock On the rate of convergence of empirical measures in
  $\8$-transportation distance.
\newblock {\em Can. J. Math.}, 67(6):1358--1383, 2015.

\bibitem[Var07]{Varadhan-Stoc}
Sathamangalam R~Srinivasa Varadhan.
\newblock {\em Stochastic Processes}.
\newblock American Mathematical Society, Rhode Island, 2007.

\bibitem[Ver97]{Veretennikov}
Alexander~Yu. Veretennikov.
\newblock On polynomial mixing bounds for stochastic differential equations.
\newblock {\em Stoch. Process. Their. Appl.}, 70(1):115--127, Oct 1997.

\bibitem[ZWE18]{ZWE}
Linfeng Zhang, Han Wang, and Weinan E.
\newblock Reinforced dynamics for enhanced sampling in large atomic and
  molecular systems.
\newblock {\em J. Chem. Phys.}, 148:124113, 2018.

\end{thebibliography}
\end{document}